\begin{document}

\title{\sc Classifying affine structures with focus-focus singularities}
\author{Xiudi Tang}
\date{}
\maketitle

\begin{abstract}
  We study the singular affine structures of integrable systems with focus-focus fibers on the image of momentum maps.
  The classification of singular affine structures is equivalent to the classification of simple semitoric systems up to fiber-preserving symplectomorphisms but is not equivalent for semitoric systems with multiple pinched fibers and we give counterexamples for any number of more than one pinch on each fiber.
\end{abstract}

\section{Introduction}
\label{sec:intro}

The ``dimension reduction by half'' principle is central in the symplectic theory of finite-dimensional integrable systems, initiated from that diffeomorphic manifolds have symplectomorphic cotangent bundles.
Recall that an \emph{integrable system} on a symplectic manifold $(M, \om)$ of dimension $2n$ is a smooth almost everywhere submersion $F \colon M \to \R^n$ with Poisson commutative components.
If every component of $F$ has a $2\pi$-periodic Hamiltonian flow---called \emph{toric}---there is a celebrated theorem by Atiyah~\cite{MR642416}, Guillemin--Sternberg~\cite{MR664117}, and Delzant~\cite{MR984900}, saying that the isomorphism classes of compact toric systems are bijective to Delzant polytopes $\De \in \R^n$ up to tropical affine transformations; in this case, the integrable system is determined by the image of the momentum map, and this observation yields tons of results and studies.
The $\R^n$ is naturally seen as a tropical affine manifold, while $\De$ is a tropical affine submanifold with corners.
In general, an integrable system may have various singular fibers and $B = F(M)$ has singularity and monodromy which obstruct its affine embedding into $\R^n$.
Then $F \colon (M, \om) \to B$ is nothing but a \emph{singular Lagrangian fibration}.
We quote the folklore principle from~\cite{MR3868426}.
\begin{quotation}
  Let  $F \colon (M, \om) \to B$ and $F' \colon (M', \om') \to B'$ be two singular Lagrangian fibrations on symplectic manifolds.
  If $B$ and $B'$ are affinely equivalent (as stratified manifolds with singular affine structures), then these Lagrangian fibrations are fiberwise symplectomorphic.
\end{quotation}

This principle is true as is in many but \textbf{not all} cases.
It is true in the toric and the two-dimensional cases, as Dufour--Molino--Toulet~\cite{MR1278158} showed that integrable systems of dimension two are classified by their affine Reeb graphs.
It is crucial to understand the singularity and monodromy of the affine structure of $B$.
This is known for semitoric and almost-toric systems of dimension four by~\cite{MR1941440,PeTa2018}.
More than these, a complete classification of simple semitoric systems up to fiber-preserving symplectomorphisms is due to~\cite{2019arXiv190903501v3P} and that of almost-toric systems up to symplectomorphisms is seen in~\cite{MR2670163}.

In this paper, we aim to answer~\cite[Problem 2.2]{MR3868426} on the principle above, for semitoric and almost-toric systems of dimension four.
Of course, the answers depend on the actual definition of the ``stratified manifolds with singular affine structures''.
We make use of the \emph{period lattice} over both regular and singular values of $F$.
In \cref{sec:int-sys} a preliminary on integrable systems and their invariants is provided.
In \cref{sec:affine-structure} there states the precise meaning of the affine structures and their isomorphisms, so as to set a clear goal of the problem.
In \cref{sec:main-result} we give our answers to this question.
In \cref{ssec:main-theorem-single-pinch} we prove that this principle always holds for simple semitoric systems.
In \cref{ssec:first-order} we quote the first-order smooth invariants from Bolsinov--Izosimov~\cite{MR4057723}.
In \cref{ssec:main-theorem-multiple-pinch} we show that this principle does not hold for semitoric systems in general by trivial counterexamples for twice pinched fibers and nontrivial counterexamples for three times pinched fibers in \cref{ssec:counterexamples}.

\subsection*{Acknowledgements}

The author thanks the organizers of Finite Dimensional Integrable Systems in Geometry and Mathematical Physics in 2017 at the Centre de Recerca Matem\`atica where many interesting problems were collected, organized, and announced.
This work was initiated in 2018 at Cornell University.
The author is grateful for helpful discussions with Reyer Sjamaar, Qingsheng Zhang, and Xuhui Zhang.
The author thanks the Beijing Institute of Technology for a Research Fund Program
for Young Scholars and National Natural Science Foundation of China for a Young Scientist Fund.

\section{Integrable systems with focus-focus fibers}
\label{sec:int-sys}

\begin{definition} \label{def:ator-stor}
  An integrable system $F$ on $(M, \om)$ is \emph{almost-toric} if all singular points of $F$ are nondegenerate and without hyperbolic components.
  Moreover, we assume $F$ to be proper and have connected fibers.

  An integrable system $F = (J, H)$ on $(M, \om)$ is \emph{semitoric} if it is almost-toric, and $J$ is proper and has $2\pi$-periodic Hamiltonian flow.
\end{definition}

The notions in \cref{def:ator-stor} find their origins in~\cite{MR1852084,MR2024634,MR2304341}.

\begin{definition} \label{def:isom-int-sys}
  A symplectomorphism $\varphi \colon M \to M'$ is an \emph{isomorphism} from an integrable system $F$ on $(M, \om)$ to $F'$ on $(M', \om')$ if it lifts a diffeomorphism $G \colon F(M) \to F'(M')$.
\end{definition}

Almost-toric systems adapt to \cref{def:isom-int-sys} but semitoric ones have a more dedicated definition of isomorphisms making use of the global $\bbS^1$-action.

\begin{definition} \label{def:isom-int-sys-stor}
  A symplectomorphism $\varphi \colon M \to M'$ is a \emph{semitoric isomorphism} from an semitoric system $F$ on $(M, \om)$ to $F'$ on $(M', \om')$ of dimension four if it lifts a diffeomorphism $G \colon F(M) \to F'(M')$ in the form of $G(c^1, c^2) = (c^1, g(c^1, c^2))$ for some smooth function $g$ such that $\frac{\partial g}{\partial c^2}>0$ everywhere.
\end{definition}

Similarly, one could define local and semi-local isomorphisms, respectively, on the germs of neighborhoods of a point and germs of saturated neighborhoods of a fiber, respectively, and say local and semi-local classifications of these germs.
By Eliasson~\cite{Eliasson1984}, the local equivalence classes are the same as algebraic/topological classes, which are cartesian products of regular, elliptic, hyperbolic, and focus-focus normal forms.
In semitoric and almost-toric systems, there is no hyperbolic component, and when the dimension is four the only local types of singular points are elliptic-regular, elliptic-elliptic, which are the same as toric cases, and focus-focus, which is new.
By Zung~\cite{MR1389366}, a focus-focus point is isolated and has a saturated neighborhood with only finitely many singular points, all of the focus-focus type on the same fiber, which we call a \emph{focus-focus fiber}.
Only at a focus-focus fiber do the semi-local germs have more than one equivalence class, classified by Pelayo--Tang~\cite{PeTa2018}.

\begin{definition} \label{def:diff-positive}
  Let $\frakV$ be the set of germs of diffeomorphisms $G$ of $\R^2$ fixing $0$ at $0$ with $G(c^1, c^2) = (c^1, g(c^1, c^2))$ for some smooth function $g$, and let $\frakV_+$ be the subset of $\frakV$ of those with $\frac{\partial g}{\partial c^2} > 0$.
\end{definition}

\begin{definition} [{\cite[(3.3)]{PeTa2018}}] \label{def:ff-label}
  Let $\sfR$ be the space of formal power series in two variables $X, Y$, without the constant term, $\sfR_{2\pi X} \defeq \sfR / (2\pi X)\Z$, and let $\sfR_+$ be the group of formal power series in $\sfR$ with positive $Y$-coefficients.
  A \emph{focus-focus label} of \emph{multiplicity} $m \in \N$ is $\sfl = [\sfs_j, \sfg_{j, \ell}]_{j, \ell \in \Z_m} \in \sfR_{2\pi X}^m \times \sfR_+^{m^2}$ satisfying the following relations:
  \begin{equation*}
    \begin{cases}
      \sfs_j(X, Y) = \sfs_\ell(X, \sfg_{j, \ell}(X, Y)), \\
      \sfg_{j, j}(X, Y) = Y, \\
      \sfg_{j, p}(X, Y) = \sfg_{\ell, p}(X, \sfg_{j, \ell}(X, Y)),
    \end{cases}
  \end{equation*}
  for $j, \ell, p \in \Z_m$ under the action by $\Z_m$ on the indices.
\end{definition}

\begin{theorem} [{\cite[Theorem~1.1]{PeTa2018}}] \label{thm:invariant-bijection-stor}
  An orbit of semi-local germs at a focus-focus fiber under the action of $\frakV_+$ is one-to-one correspondent to a focus-focus label of the same multiplicity.
\end{theorem}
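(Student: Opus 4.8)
The plan is to adapt the known semi-local classification of a single focus-focus pinch (Vu Ngoc's invariant) to a fiber carrying $m$ pinches, and then to run the two usual directions: the label is a complete invariant of the $\frakV_+$-orbit, and every admissible label is realized. Fix a saturated neighborhood $W$ of the focus-focus fiber $\Lambda$; by Zung's structure theorem~\cite{MR1389366} the singular set is a finite collection of focus-focus points $p_0, \dots, p_{m-1}$, the regular part $\Lambda \setminus \{p_0, \dots, p_{m-1}\}$ is a cyclic chain of $m$ open cylinders, and semi-locally there is an $\bbS^1$-action; we take its momentum as the first base coordinate $c^1$, with generator and additive constant fixed so that $c^1|_\Lambda = 0$. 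An element of $\frakV_+$ acts on a germ by post-composing $F$ with the base diffeomorphism, which is exactly the semi-local form of \cref{def:isom-int-sys-stor} with $c^1$ preserved.

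\textbf{Construction of the label.} Around each $p_j$ apply Eliasson's normal form~\cite{Eliasson1984}: there are local symplectic coordinates in which $F$ is conjugate to the standard quadratic focus-focus model $(q_1, q_2) = (x\xi + y\eta,\, x\eta - y\xi)$ composed with a germ of diffeomorphism $\phi_j$ of $(\R^2, 0)$, with the $\bbS^1$-momentum identified with $q_1$. Integrating the Liouville one-form over the vanishing $2$-cycle in the regular fibers near $p_j$ gives a multivalued action whose monodromy is the classical logarithmic term $\mathrm{Im}(z \log z - z)$ in $z = c^1 + \mathrm{i}\, c^2$; subtracting it leaves a single-valued smooth germ whose Taylor series without constant term, written in $(X, Y) = (c^1, c^2)$, is $\sfs_j$, well defined in $\sfR_{2\pi X}$ since a change of branch shifts it by $(2\pi X)\Z$. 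On the cylinder joining $p_j$ to $p_\ell$ both Eliasson charts are defined, and the transition of base coordinates $\phi_\ell^{-1} \circ \phi_j$ fixes $c^1$ — it must intertwine the two identifications of the canonical $\bbS^1$-momentum — and is orientation-preserving in $c^2$, so its Taylor series in $Y$ with $X$ as a parameter is an element $\sfg_{j,\ell} \in \sfR_+$. The three displayed relations then follow formally: $\sfg_{j,j} = \mathrm{id}$ and the cocycle identity $\sfg_{j,p}(X, Y) = \sfg_{\ell, p}(X, \sfg_{j,\ell}(X, Y))$ from composing transitions, and $\sfs_j(X, Y) = \sfs_\ell(X, \sfg_{j,\ell}(X, Y))$ because the two regularized actions differ only by the change of base coordinate. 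One then checks independence of the auxiliary choices — the Eliasson normalizations (unique up to the residual symmetry of the local model, which only alters $\phi_j$ within $\frakV_+$), the integration paths, and the labeling of the pinches — so that the class in $\sfR_{2\pi X}^m \times \sfR_+^{m^2}$ modulo the $\Z_m$-action on indices is a well-defined invariant of the $\frakV_+$-orbit.

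\textbf{Injectivity and surjectivity.} For injectivity, suppose $F$ on $W$ and $F'$ on $W'$ have equal labels after a $\frakV_+$-change of base, which we perform so the two labels agree on the nose; matching the Eliasson charts pinch by pinch produces symplectomorphisms of the corresponding model neighborhoods, and equality of the $\sfs_j$ and $\sfg_{j,\ell}$ makes these compatible on the overlapping cylinders up to a flat error. A Moser-type deformation on the cylindrical regular part, $\bbS^1$-equivariant and fibration-preserving, absorbs the flat error and glues the local pieces into a fiber-preserving symplectomorphism $W \to W'$ covering a diffeomorphism of the form in \cref{def:diff-positive}. For surjectivity, start from the model germ of multiplicity $m$ obtained by gluing $m$ standard focus-focus models cyclically with trivial transitions (label $[\,0,\, Y\,]$), realize the prescribed formal series $\sfs_j$ and $\sfg_{j,\ell}$ by genuine smooth germs via Borel's lemma, and use them to redefine the momentum map and the identifications along the cylinders; the displayed relations are precisely the compatibility conditions making these local modifications patch into a semitoric germ with the prescribed label.

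\textbf{Main obstacle.} The delicate point is the patching in the injectivity step: one must show that two germs with the same label differ by a deformation that is \emph{flat} along the regular fibers and can be killed $\bbS^1$-equivariantly without disturbing the already-matched neighborhoods of the pinches, a relative Moser/Weinstein argument on the cylindrical part. The cyclic arrangement of $m$ pinches only adds bookkeeping — the cocycle relations among the $\sfg_{j,\ell}$, the $\Z_m$-ambiguity of the starting pinch, and the single $2\pi$ of monodromy collected around the whole necklace — while the genuine analysis is the same flat-perturbation argument as in the $m = 1$ case.
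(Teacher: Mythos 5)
This theorem is quoted from \cite{PeTa2018} and the paper offers no proof of its own, so the only meaningful comparison is with the proof there. Your outline follows the same route as that proof (and as the single-pinch prototype of V\~u Ng\d{o}c): Eliasson charts at each pinch, a desingularized action producing the $\sfs_j$, chart transitions producing the $\sfg_{j,\ell}$, the three compatibility relations, injectivity by matching charts and absorbing a flat discrepancy by a Moser-type argument, and surjectivity by Borel realization and gluing. The architecture is correct.

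Two points of your multi-pinch bookkeeping are stated wrongly, and they concern exactly what is new when $m>1$. First, the second action integral near an $m$-pinched fiber picks up a logarithmic singularity from \emph{every} pinch at once: the correct statement is that $2\pi A_2-\sum_{j\in\Z_m}(E_j)^*\Im(z\ln z-z)$ extends to a single smooth germ $\wt S$, and $\sfs_j$ is the Taylor series of $\wt S$ read in the $j$-th Eliasson chart (this is precisely \cref{eq:def-S-tilde}). As literally written --- subtract the one logarithmic term attached to $p_j$ from ``the action near $p_j$'' --- the construction does not yield a smooth germ for $m\ge 2$, since the other $m-1$ logarithms survive; your later remark that the $\sfs_j$ differ only by the chart changes suggests you have the right picture, but the construction paragraph needs this correction for the relation $\sfs_j(X,Y)=\sfs_\ell(X,\sfg_{j,\ell}(X,Y))$ to make sense. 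Second, ``the single $2\pi$ of monodromy collected around the whole necklace'' is false: the affine monodromy around an $m$-pinched node is the $m$-th power of the elementary shear (this is why \cref{def:markedpoly-rep} uses $s_i=\sum m_k$), while the $(2\pi X)\Z$ ambiguity of each individual $\sfs_j$ comes from the branch choice, not from the total monodromy being $2\pi$. A minor slip: the cycle over which one integrates the Liouville form is the vanishing $1$-cycle, not a $2$-cycle. None of this invalidates the strategy, but as written the object you construct is not yet the label of \cref{def:ff-label}.
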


Under the composition $\frakV_+ \subset \frakV$ is a normal subgroup with quotient group $\Z_2$ generated by the reflection about the abscissa axis.

\begin{theorem} [{\cite[Theorem~1.1]{PeTa2018}}] \label{thm:invariant-bijection-ator}
  An orbit of semi-local germs at a focus-focus fiber of multiplicity $m$ under the action of $\frakV$ is one-to-one correspondent to the orbit of a focus-focus label under the action of $\Z_2$ generated by $[\sfs_j, \sfg_{j, \ell}]_{j, \ell \in \Z_m} \mapsto [\sfs'_j, \sfg'_{j, \ell}]_{j, \ell \in \Z_m}$ where
  \begin{align*}
    \sfs'_j(X, Y) &= \sfs_j(-X, Y) + k \pi X, &
    \sfg'_{j, \ell}(X, Y) &= \sfg_{j, \ell}(-X, Y).
  \end{align*}
\end{theorem}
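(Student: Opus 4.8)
The plan is to deduce Theorem~\ref{thm:invariant-bijection-ator} from Theorem~\ref{thm:invariant-bijection-stor} by a standard ``quotient of a classification'' argument. Since $\frakV_+ \trianglelefteq \frakV$ with $\frakV / \frakV_+ \cong \Z_2$, an orbit under $\frakV$ is a union of $\frakV_+$-orbits permuted by this $\Z_2$. By Theorem~\ref{thm:invariant-bijection-stor} the $\frakV_+$-orbits are parametrized by focus-focus labels, so the $\frakV$-orbits are parametrized by $\Z_2$-orbits of focus-focus labels, and what must be computed is the explicit $\Z_2$-action on labels. Concretely, fix a representative $\sigma \in \frakV \setminus \frakV_+$; the natural choice is the reflection $\sigma(c^1, c^2) = (c^1, -c^2)$ about the abscissa. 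Then the action of the nontrivial element of $\Z_2$ on labels sends the label $\sfl$ of a germ $\mathcal{F}$ to the label $\sfl'$ of the germ $\sigma \cdot \mathcal{F}$ (pulled back, or pushed forward, by $\sigma$); I need to show this is well-defined (independent of the choice of representative within $\sigma \frakV_+$, which is immediate since composing with an element of $\frakV_+$ doesn't change the $\frakV_+$-orbit) and then identify it with the stated formula.

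The heart of the work is therefore the explicit computation: starting from the construction of the invariants $\sfs_j, \sfg_{j,\ell}$ in~\cite{PeTa2018}, track how each transforms under precomposition by the reflection $c^1 \mapsto c^1$, $c^2 \mapsto -c^2$. First I would recall that $X, Y$ are the local affine-type coordinates near the focus-focus value ($X$ corresponds to the $\bbS^1$-momentum $c^1 = J$, and $Y$ is a transversal coordinate); the functions $\sfg_{j,\ell}$ are ``transition'' power series comparing action coordinates built at different pinch points $j$ and $\ell$, while $\sfs_j$ encodes the Taylor expansion of the singular action integral (the analogue of Vu Ngoc's $S$-invariant) at pinch $j$. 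Reflecting $c^2$ reverses the orientation of the transversal direction, which has the effect $Y \mapsto Y$ on the normalized transversal coordinate but flips the sign of $X = J$ in the formulas where the periodicity of the $\bbS^1$-action enters — hence the substitution $X \mapsto -X$ throughout. The subtlety is the additive term $k\pi X$ in $\sfs'_j$: this comes from the fact that $\sfs_j$ lives in $\sfR_{2\pi X} = \sfR / (2\pi X)\Z$, i.e.\ it is only defined modulo $2\pi X \Z$, and the reflection interacts with the choice of branch/lift of the multivalued action; one must check that the ambiguity introduced is exactly an integer multiple $k$ of $\pi X$ (half the period), where $k$ is determined by the multiplicity $m$ (the number of pinches), and that the relations of \cref{def:ff-label} are preserved so that $\sfl'$ is again a valid focus-focus label.

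The main obstacle I expect is pinning down this term $k\pi X$ precisely and invariantly, rather than the sign-flip $X \mapsto -X$, which is essentially bookkeeping. One needs to go into the normal-form construction of~\cite{PeTa2018} and understand how the singular action integral over the pinched torus picks up a contribution when one traverses the monodromy cycle, and how reversing orientation changes this by a half-period times the number of nodes encircled. Equivalently, $k$ should be read off from the monodromy matrix of the focus-focus fibration of multiplicity $m$ (which is $\begin{psmallmatrix} 1 & m \\ 0 & 1 \end{psmallmatrix}$ in a suitable basis), and the reflection conjugates this matrix, forcing the compensating term. I would organize the proof as: (i) reduce to computing the $\Z_2$-action via the reflection representative; (ii) show $X \mapsto -X$ is the effect on the $\sfg_{j,\ell}$ (where there is no modular ambiguity, so this is clean); (iii) carefully treat $\sfs_j$ using the defining relation $\sfs_j(X,Y) = \sfs_\ell(X, \sfg_{j,\ell}(X,Y))$ together with the quotient $\sfR_{2\pi X}$ to fix $k$; (iv) verify that $[\sfs'_j, \sfg'_{j,\ell}]$ satisfies the cocycle relations of \cref{def:ff-label}, so the map is an involution on the set of labels; and (v) conclude bijectivity on $\Z_2$-orbits.
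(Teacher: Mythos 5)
The first thing to note is that the paper contains no proof of \cref{thm:invariant-bijection-ator}: like \cref{thm:invariant-bijection-stor}, it is quoted verbatim from \cite[Theorem~1.1]{PeTa2018}, so there is no internal argument to compare against. Your overall strategy --- $\frakV_+$ is normal in $\frakV$ with quotient $\Z_2$, so $\frakV$-orbits of germs are $\Z_2$-orbits of $\frakV_+$-orbits, and by \cref{thm:invariant-bijection-stor} it remains only to compute the induced involution on labels via the reflection representative $\sigma(c^1,c^2)=(c^1,-c^2)$ --- is the right (and essentially the only) reduction, and well-definedness of the induced action on labels is indeed immediate from normality. That part is fine.

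The two points where the theorem has actual content are, however, left open or misexplained. First, the mechanism you give for the sign flip is not correct as stated: $\sigma$ fixes $c^1$ and negates $c^2$, so it does not ``flip the sign of $X=J$'' in any direct sense. The substitution $X\mapsto -X$ (rather than $Y\mapsto -Y$) in the label appears only after re-normalizing the Eliasson local charts of the reflected germ: by \cref{thm:eliasson-focus} the admissible changes of Eliasson base maps are $(c^1,c^2)\mapsto(\epsilon_1 c^1,\epsilon_2 c^2+\inftes)$, and restoring the orientation and abscissa conventions after composing with $\sigma$ forces $\epsilon_1=-1$. This chart-level bookkeeping is the actual proof, and it also requires checking whether reversing orientation reverses the cyclic order of the $m$ pinch points along the flow of $X_H$, i.e.\ whether an index permutation $j\mapsto -j$ should appear in $\sfg'_{j,\ell}$; the stated formula has none, so the ordering convention must be verified, and your outline does not address it. Second, the term $k\pi X$: you correctly locate it in the $\sfR_{2\pi X}=\sfR/(2\pi X)\Z$ ambiguity of the regularized action integral, but ``one must check that $k$ is determined by the multiplicity'' is precisely the computation that needs to be carried out, and the appeal to conjugating the monodromy matrix $\begin{pmatrix}1 & m\\ 0 & 1\end{pmatrix}$ does not by itself produce the value of $k$ --- one has to track how the branch of $\ln$ in the regularization $K_+$ interacts with the orientation reversal at each of the $m$ nodes. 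As written, the proposal is a plausible outline of how \cite{PeTa2018} proves its Theorem~1.1, not a proof; steps (ii) and (iii) of your plan are exactly where the work lies and they are not executed.
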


Based on the semi-local classification at focus-focus fibers, a global classification of isomorphism classes of semitoric systems was proven.

\begin{definition} \label{def:complete-ff-label}
  Let $\tsfR = \R[[X, Y]]$ be the space of formal power series in two variables $X, Y$.
  A \emph{complete focus-focus label} of \emph{multiplicity} $m \in \N$ is $\tsfl = [\tsfs_j, \sfg_{j, \ell}]_{j, \ell \in \Z_m} \in \tsfR^m \times \sfR_+^{m^2}$ satisfying the following relations:
  \begin{equation*}
    \begin{cases}
      \tsfs_j(X, Y) = \tsfs_\ell(X, \sfg_{j, \ell}(X, Y)), \\
      \sfg_{j, j}(X, Y) = Y, \\
      \sfg_{j, p}(X, Y) = \sfg_{\ell, p}(X, \sfg_{j, \ell}(X, Y)),
    \end{cases}
  \end{equation*}
  for $j, \ell, p \in \Z_m$ under the action by $\Z_m$ on the indices.
\end{definition}

\begin{definition} \label{def:stor-polygon-corner}
  Let
  \begin{equation*}
    T = \begin{pmatrix} 1 & 0 \\ 1 & 1 \end{pmatrix}.
  \end{equation*}
  Let $v$ be a vertex of a convex polygon, and let $\xi_1, \xi_2 \in \Z^2$ be the primitive vectors directing the edges emanating from $v$ in the counter-clockwise order and let $s \in \N$.
  Then $v$ is
  \begin{itemize}[noitemsep]
    \item a \emph{Delzant corner} if $\det(\xi_1, \xi_2) = \pm 1$;
    \item an \emph{$s$-fake corner} if $\det(\xi_1, T^s \xi_2)=0$; or
    \item an \emph{$s$-hidden corner} if $\det(\xi_1, T^s \xi_2) = \pm 1$.
  \end{itemize}
\end{definition}

\begin{definition} [{\cite[Definition 4.4]{2019arXiv190903501v3P}}]\label{def:markedpoly-rep}
  A \emph{complete semitoric ingredient representative} is a tuple
  \begin{equation*}
    \Pa{\De, (c_i)_{i = 1}^\vf, \Pa{[\tsfs^i_j, \sfg^i_{j, \ell}]_{j, \ell \in \Z_{m_i}}}_{i = 1}^\vf}
  \end{equation*}
  where $\De$ is a convex polygon with $\proj_1 \colon \De \to \R$ proper, $(c_i)_{i = 1}^\vf$ is a tuple of points in lexicographic order, and for $i \in \Seq{1, \ldots, \vf}$ the point $c_i = (c_i^1, c_i^2)$ lies in the interior of $\De$ and $[\tsfs^i_j, \sfg^i_{j, \ell}]_{j, \ell \in \Z_{m_i}}$ is complete focus-focus label of multiplicity $m_i$ such that $(\tsfs^i_j)^{(0, 0)} = 2\pi c_i^2$ for all $j \in \Z_{m_i}$ and any vertex $v$ of $\De$ is either an $s_i$-hidden or $s_i$-fake corner if $v \in \ell_{c_i}^+$ for some $i \in \Seq{1, \ldots, \vf}$ where $\ell_{c_i}^+ = \Set{c_i^1} \times [c_i^2, \infty)$ and $s_i = \sum_{1 \leq k \leq \vf, c_k^1 = c_i^1} m_k$, while $v$ is a Delzant corner otherwise.
\end{definition}

\begin{theorem} [{\cite[Theorem 4.11]{2019arXiv190903501v3P}}] \label{thm:markedpoly-classification}
  The isomorphism class of a semitoric system of dimension four is one-to-one correspondent to a complete semitoric ingredient representative under the action of $\Z \times \R$ by for $(k, b)$ applying a shear transformation by $T^k$ and a translation by $(0, b)$ on the polygon and points, and on each complete focus-focus label by
  \begin{align*}
    [\tsfs^i_j, \sfg^i_{j, \ell}]_{j, \ell \in \Z_{m_i}} \mapsto [\tsfs^i_j + 2\pi (k X + b), \sfg^i_{j, \ell}]_{j, \ell \in \Z_{m_i}}.
  \end{align*}
\end{theorem}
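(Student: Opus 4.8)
The plan is to establish the bijection in three stages — constructing the invariant and checking that it is well defined modulo the $\Z \times \R$-action, proving injectivity, and proving surjectivity — adapting the strategy used for simple semitoric systems by Pelayo--V\~{u} Ng\d{o}c but feeding in the semi-local data of \cref{thm:invariant-bijection-stor} in place of the single Taylor-series invariant. The organizing observation is that the proper periodic integral $J$ stratifies $B = F(M)$ into the open set of regular values, the finite set $\{c_i\}_{i=1}^{\vf}$ of focus-focus values in the interior, and the elliptic boundary; the boundary and the regular part are governed by the integral affine structure of the period lattice exactly as in the toric case, so only the focus-focus values carry genuinely new information.

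To construct the invariant, first delete from $B$ the upward half-lines $\ell_{c_i}^+$. On the complement the period lattice defines a trivializable integral affine structure, so a choice of base point and of a branch of the developing map exhibits $B$, after cutting, as a convex polygon $\De$ with $\proj_1$ proper and with the focus-focus values sitting at interior points $c_i$ listed in lexicographic order. The residual ambiguity in this presentation is precisely the subgroup of $\mathrm{AGL}(2, \Z)$ preserving the vertical foliation together with the value of $J$, which is generated by the shears $T^k$ and the vertical translations $(0, b)$. For each $i$, the semi-local germ of $F$ along the fiber over $c_i$ corresponds by \cref{thm:invariant-bijection-stor} to a focus-focus label of multiplicity $m_i$, and adjoining the constant term $(\tsfs^i_j)^{(0,0)} = 2\pi c_i^2$ forced by the value of $J$ promotes it to a complete focus-focus label. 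Computing the monodromy of the period lattice around the focus-focus values lying on the line $\{c_i^1\} \times \R$ shows that it is conjugate to $T^{s_i}$ with $s_i = \sum_{c_k^1 = c_i^1} m_k$, which is exactly what forces every vertex of $\De$ on some $\ell_{c_i}^+$ to be an $s_i$-hidden or $s_i$-fake corner and every other vertex to be Delzant. Finally, replacing the branch of the developing map by its image under $T^k$ and a translation by $(0, b)$ changes each label by $\tsfs^i_j \mapsto \tsfs^i_j + 2\pi(kX + b)$ while fixing the $\sfg^i_{j, \ell}$, which is the claimed action; hence the invariant descends to isomorphism classes of semitoric systems.

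For injectivity, suppose $F$ and $F'$ have representatives related by $(k, b)$; applying to $M'$ the corresponding symplectomorphism, we may assume the representatives coincide. First build a fiber-preserving symplectomorphism over a saturated neighborhood of the regular and elliptic part of $B$ from the equality of $\De$, using Duistermaat-style action--angle coordinates over the regular locus and the rigidity of toric germs at the elliptic vertices. Next, over a saturated neighborhood of each fiber over $c_i$, use the equality of complete focus-focus labels together with \cref{thm:invariant-bijection-stor} to obtain a semi-local fiber-preserving symplectomorphism intertwining $J$. These need not agree on the overlapping annular regions, but their discrepancy there is a fiberwise symplectomorphism covering the identity, hence is recorded by a closed one-form whose cohomology class is the relative twisting; since that twisting is already contained in the $\tsfs^i_j$ and $\sfg^i_{j, \ell}$, the class vanishes, the discrepancy can be removed by a Moser isotopy, and the local maps glue to a global isomorphism. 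For surjectivity one runs the construction backwards: take the Delzant-type toric model over $\De$ away from the cuts $\ell_{c_i}^+$, excise saturated neighborhoods of the cuts, and glue in the standard focus-focus semi-local models carrying the prescribed complete labels — which exist and are unique up to semi-local isomorphism by \cref{thm:invariant-bijection-stor} — the $s_i$-hidden and $s_i$-fake corner conditions ensuring that these models fit the toric model along the gluing loci; the result is a smooth symplectic four-manifold with a global Hamiltonian $\bbS^1$-action and proper momentum map, which one completes to a semitoric system realizing the representative.

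The hard part, both in the injectivity step and its mirror in surjectivity, will be the gluing: one must control simultaneously the non-uniqueness of the developing map and of the semi-local trivializations around each multiply pinched fiber, and check that the residual twisting-index ambiguity is annihilated precisely by the compatibility relations among the $\tsfs^i_j$ and $\sfg^i_{j, \ell}$. With $m_i > 1$ pinches on a single fiber this is substantially more delicate than in the simple case, since the several local sheets interact through the cocycle $\sfg^i_{j, \ell}$, and the cohomological Moser argument has to be made uniform over all focus-focus fibers on a common vertical line, where the monodromy is $T^{s_i}$ rather than $T^{m_i}$.
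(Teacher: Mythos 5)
This theorem is not proved in the paper at all: it is imported verbatim from \cite[Theorem 4.11]{2019arXiv190903501v3P}, so there is no internal proof to compare against, and your attempt has to be measured against the argument in that reference. Your outline does follow the same overall strategy as that reference (cut along the half-lines $\ell_{c_i}^+$, develop the integral affine structure of $B_+$ into a polygon $\De$, attach the semi-local data of \cref{thm:invariant-bijection-stor} completed by the height term $(\tsfs^i_j)^{(0,0)} = 2\pi c_i^2$, identify the residual ambiguity with the $\Z \times \R$-action, then prove injectivity and surjectivity by a glue-and-Moser argument). So the skeleton is right.

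But as a proof it has genuine gaps, and they sit exactly where the content of the theorem lives. First, that the developed image of $B_+$ is a \emph{convex} polygon with $\proj_1$ proper is itself a nontrivial theorem (the semitoric convexity result of V\~{u} Ng\d{o}c), not a formality of choosing a branch of the developing map; you assert it in passing. Second, and more seriously, the entire injectivity/surjectivity step is deferred: you acknowledge that the gluing over the annular overlaps, the removal of the discrepancy by a Moser isotopy, and the verification that the residual twisting ambiguity is exactly absorbed by the linear-in-$X$ parts of the $\tsfs^i_j$ and by the cocycle conditions on the $\sfg^i_{j,\ell}$ are ``the hard part,'' but these are precisely the points at which the non-simple case ($m_i > 1$) differs from Pelayo--V\~{u} Ng\d{o}c and at which \cite[Theorem 4.11]{2019arXiv190903501v3P} required new arguments --- one must show that no further invariant survives the interaction of the several local sheets on a single fiber and that the construction is coherent across all nodes on a common vertical line, where the monodromy is $T^{s_i}$ rather than $T^{m_i}$. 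A sketch that names these difficulties without resolving them does not establish the classification; it reduces it to the statement being proved. There is also a small imprecision in the injectivity step: the $(k,b)$-action is not realized by a symplectomorphism of $M'$ but by a change of the piecewise affine coordinates $A'_+$, so ``applying to $M'$ the corresponding symplectomorphism'' should be replaced by adjusting the choice of developing map.
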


\section{Tropical affine manifolds with singularities}
\label{sec:affine-structure}

\subsection{Tropical affine structures}
\label{ssec:affine-structure}

Let $F \colon M \to \R^n$ be an integrable system on $(M, \om)$ with compact and connected fibers and let $B = F(M)$.
In this section, we describe the affine structures of $B$ induced by a moment map $F$.
We use $T^* B$ to denote $\Res{T^* \R^2}_B$.
Let $\calN(B, b)$ denote the collection of open neighborhoods of $b \in B$ in $B$.

For any open subset $U \subset B$, and each $\beta \in \Om^1(U)$, let $X_{\beta} = -\om^{-1} (F^* \beta) \in \frakX(M)$.
Let $\Psi_\beta \in \Diff(F^{-1}(U))$ be the time-$1$ map of $X_\beta$.
Then $\Psi$ gives the action of $\Gamma(T^*U)$ on $F^{-1}(U)$ by fiber-preserving diffeomorphisms.
For any point $b \in B$, and each $\beta_b \in T^*_b B$, let $X_{\beta_b} = -\om^{-1} (F^* \beta_b) \in \frakX(F^{-1}(b))$.
Let $\Psi_{\beta_b} \in \Diff(F^{-1}(b))$ be the time-$1$ map of $X_{\beta_b}$.
Then $\Psi$ gives the action of $T^*_b B$ on $F^{-1}(b)$ by diffeomorphisms.
\footnote{A singular fiber $F^{-1}(b)$ may not be a submanifold of $M$; however, each $\beta_b \in T^*_b B$ can be extended to a smooth $1$-form near $b$ so the smoothness of $X_{\beta}$ and $\Psi_{\beta_b}$ makes sense.}

\begin{definition} \label{def:affine-structures}
  Let
  \begin{align*}
    \check L^F_b &= \Set{\alpha_b \in T^*_b B \mmid \Psi_{2\pi \alpha_b} = \identity_{F^{-1}(b)}} \\
    \Period^F(U) &= \Set{\alpha \in \Om^1(U) \mmid \Psi_{2\pi \alpha} = \identity_{F^{-1}(U)}} \\
    \calA^F(U) &= \Set{A \in \rm\rmC^\infty(U) \mmid \Psi_{2\pi \der A} = \identity_{F^{-1}(U)}}
  \end{align*}
  for any $b \in B$ and any open $U \subseteq B$.
  Let $\check L^F = \bigcup_{b \in B} \check L^F_b \subseteq T^* B$ with a smooth surjection $\check \pi^F \colon \check L^F \to B$ induced from the natural projection $\rmT^* B \to B$.

  We call $\check L^F$ the \emph{period lattice} of $F$ and $\check \pi^F$ the \emph{period lattice bundle} of $F$.
  Then $\Period^F$ is a sheaf of abelian groups over $B$, called the \emph{period sheaf} of $F$; the local sections of $\Period^F$ are \emph{period forms}.
  We call $\calA^F$ the \emph{action sheaf} of $F$ or the \emph{singular affine structure} on $B$, with the local sections \emph{local action coordinates}.
\end{definition}

By the action-angle theorem, on any simply connected open set $U$ of regular values of $F$, there are action integrals $A_1, \dotsc, A_n \colon U \to \R$, in which case, $(\alpha_1, \dotsc, \alpha_n) = (\der A_1, \dotsc, \der A_n)$ is a $\Z$-basis of $\Period^F(U)$.
We ask when $B$ carries a natural structure of tropical affine manifold with singularity.

\begin{definition} [{\cite[Definition 1.22 and 1.24]{MR2722115}}]
  A \emph{tropical affine manifold} is a topological manifold with an atlas of coordinate charts $A \colon U \to \R^n$ with transition functions $A \circ (A')^{-1} \in \rmGL(n, \Z) \ltimes \R^n \defeq \Trop(\R^n)$.\footnote{The notation $\Trop(\R^n)$ is for the tropical affine group; some authors call it the integral affine group which sometimes mean $\rmGL(n, \Z) \ltimes \Z^n$, though.}

  A \emph{tropical affine manifold with singularities} is a topological manifold with a subset called the \emph{singular locus}, which is a locally finite union of locally closed submanifolds of codimension at least two, and a tropical affine structure on the complement of the singular locus.

  A \emph{tropical affine manifold with corners and singularities} is a topological manifold along with the singular locus and a tropical affine structure with corners on the complement of the singular locus.
\end{definition}

\begin{lemma}
  If $F$ is a semitoric or almost-toric system on $(M, \om)$, then $B = F(M)$ is naturally a tropical affine manifold with the set of nodes as the singular locus and the action integrals as coordinate charts.
\end{lemma}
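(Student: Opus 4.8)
The plan is to assemble the standard local normal forms of $F$ into an atlas on $B$ whose charts are action integrals and whose transition maps lie in $\Trop(\R^n)$, and then to check that the nodes form an admissible singular locus; nothing has to be done ``at'' a node, since the affine structure is only asserted on its complement. First I would recall the local pictures of the base. By the action--angle theorem quoted above, every regular value of $F$ has a simply connected neighbourhood $U$, which after shrinking carries action integrals $A_1, \dotsc, A_n$ with $(A_1, \dotsc, A_n) \colon U \to \R^n$ a diffeomorphism onto an open set and $(\der A_1, \dotsc, \der A_n)$ a $\Z$-basis of $\Period^F(U)$. By Eliasson's normal form~\cite{Eliasson1984}, in the semitoric or almost-toric setting there are no hyperbolic blocks, so an elliptic singular value of $F$ has a neighbourhood in $B$ that is a relatively open piece of a half-space or a corner, and on it the action integrals extend smoothly up to the boundary with their differentials still giving a $\Z$-basis of $\Period^F$; this is the classical toric-type local model. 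Finally, by Zung's structure theorem~\cite{MR1389366}, a focus-focus point is isolated in $M$ and its fibre admits a saturated neighbourhood $W$ whose only singular fibre is that focus-focus fibre, with $F(W)$ an open neighbourhood of the focus-focus value in the interior of $B$; in particular $F(W)$ minus that value consists only of regular values.

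From these models, $B$ is a topological manifold with corners and each node --- the image of a focus-focus fibre --- lies in the interior of $B$. Since focus-focus points are isolated in $M$ and $F$ is proper, the preimage of any compact subset of $B$ contains only finitely many of them, so the set $\mathcal{Z}$ of nodes is discrete in $B$; hence $\mathcal{Z}$ is a locally finite union of codimension-two locally closed submanifolds, which is exactly what a singular locus is required to be.

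To build the atlas on $B' \defeq B \setminus \mathcal{Z}$, I would cover $B'$ by simply connected open sets $U$, each small enough to fall into one of the regular or elliptic model situations above; this is possible because, by the Zung model, a small punctured neighbourhood of a node contains only regular values, so every point of $B'$ is reached. Each such $U$ then yields an action-integral chart $A_U$ onto an open subset of $\R^n$, a half-space, or a corner. If $U \cap U'$ is connected, then $\der A_U$ and $\der A_{U'}$ are both $\Z$-bases of the free $\Z$-module $\Period^F(U \cap U')$, so $\der A_{U'} = \mathsf{M} \, \der A_U$ for a constant $\mathsf{M} \in \rmGL(n, \Z)$, and integrating gives $A_{U'} = \mathsf{M} A_U + v$ with $v \in \R^n$. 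Thus all transition maps lie in $\Trop(\R^n)$, and they preserve the boundary/corner stratification because the elliptic local models are the standard toric ones. This exhibits $B'$ as a tropical affine manifold (with corners) whose atlas consists of the action integrals and whose singular locus is $\mathcal{Z}$ --- exactly the structure asserted.

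The real work is in the inputs rather than the bookkeeping: one must invoke the Eliasson normal form carefully to know the action integrals extend smoothly across elliptic boundary points with toric-standard gluing, and the Zung structure theorem to know that a small punctured neighbourhood of a focus-focus value carries only regular fibres, so that the regular-part construction transports there. I would also note that $\Period^F$ generically has nontrivial monodromy around a node; this is precisely why the tropical affine structure cannot be continued across $\mathcal{Z}$, which is consistent with declaring the nodes to be singular rather than attempting to absorb them into the regular part.
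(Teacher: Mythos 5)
Your proposal is correct and follows essentially the same route as the paper: stratify $B$ by the Eliasson type of the preimages, observe that the nodes are isolated and hence form an admissible codimension-two singular locus, and obtain the $\Trop(\R^n)$ transition maps from the fact that the differentials of any two action integrals over a simply connected set of regular values are both $\Z$-bases of the period sheaf. Your write-up is somewhat more explicit about the elliptic boundary models and the Zung semi-local structure near a focus-focus value, but the underlying argument is the same.
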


\begin{proof}
  The classification of the points in $M$ \emph{\`a la} Eliasson sorts the values of $F$: regular values with all regular preimages, boundary values with all elliptic-regular preimages, corner values with a unique elliptic-elliptic preimage for each, and nodes with regular preimages and finitely many focus-focus preimages.
  The regular, boundary, and corner values are analogous to those of toric momentum maps and constitute the strata of $B$ of dimensions two, one, and zero, respectively.
  The nodes are isolated and therefore constitute the singular locus of $B$ of codimension two.

  Let $U$ be a simply connected open set of regular values of $F$.
  Suppose we have two different action integrals $A = (A_1, A_2), A' = (A'_1, A'_2) \in \calA^F(U)$.
  Then since both $(\der A_1, \der A_2)$ and $(\der A'_1, \der A'_2)$ form bases of $\Period^F(U)$, there are $P \in \rmGL(n, \Z)$ and $T \in \R^n$ such that $A = P \circ A' + T$.
\end{proof}

\begin{proposition}
  Let $F \colon M \to \R^n$ be an integrable system on $(M, \om)$ with compact and connected fibers and let $B = F(M)$.
  Then the period lattice $\check L^F$ determines the period sheaf $\Period^F$ and the action sheaf $\calA^F$, and the latter two determine each other.
\end{proposition}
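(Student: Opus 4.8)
The plan is to first record a fiberwise description of the flows $\Psi$. For a $1$-form $\beta$ on an open $U \subseteq B$, writing $\beta$ in coordinates on $\R^n$ exhibits $X_\beta = -\om^{-1}(F^*\beta)$ as a combination of the Hamiltonian vector fields of the components of $F$ with coefficients constant along the fibers; hence, by Poisson-commutativity, $X_\beta$ is tangent to the fibers of $F$, its restriction to $F^{-1}(b)$ depends only on $\beta_b$ and equals $X_{\beta_b}$, and---the fibers being compact---$X_\beta$ is complete, so that $\Psi_{t\beta}$ is its time-$t$ flow and $\Psi_{t\beta}|_{F^{-1}(b)} = \Psi_{t\beta_b}$. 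Consequently, for a smooth $1$-form $\alpha$ on $U$ we have $\Psi_{2\pi\alpha} = \identity_{F^{-1}(U)}$ if and only if $\alpha_b \in \check L^F_b$ for every $b \in U$; that is, $\Period^F(U)$ is precisely the set of smooth sections of $\check\pi^F$ over $U$. This already shows that $\check L^F$ determines $\Period^F$, and, since $\calA^F(U) = \Set{A \in \rmC^\infty(U) \mmid \der A \in \Period^F(U)}$ by \cref{def:affine-structures}, that $\check L^F$ determines $\calA^F$ and that $\Period^F$ determines $\calA^F$.

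It remains to show that $\calA^F$ determines $\Period^F$, and for this it suffices to prove that every period form is closed. Granting this, on a contractible $U$ a period form $\alpha$ equals $\der A$ for some smooth function $A$ on $U$ by the Poincar\'e lemma, and $\Psi_{2\pi\der A} = \Psi_{2\pi\alpha} = \identity$ forces $A \in \calA^F(U)$; conversely $\der\calA^F(U) \subseteq \Period^F(U)$ always, and a $1$-form that is locally a differential of a local action coordinate lies in $\Period^F(U)$ because the sets $F^{-1}(V)$ are unions of fibers of $F$ and cover $F^{-1}(U)$. Thus $\Period^F(U) = \der\calA^F(U)$ on contractible $U$, and $\Period^F$ is recovered from $\calA^F$ as the sheaf of $1$-forms that are, locally, differentials of local action coordinates.

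To prove the closedness, let $\alpha \in \Period^F(U)$ and set $\phi^t = \Psi_{t\alpha}$. Cartan's formula (and $\der\om = 0$) gives $\mathcal{L}_{X_\alpha}\om = \der\iota_{X_\alpha}\om = -F^*\der\alpha$, and since $\phi^t$ preserves the fibers of $F$ we have $F \circ \phi^t = F$, whence $\frac{\der}{\der t}(\phi^t)^*\om = (\phi^t)^*\mathcal{L}_{X_\alpha}\om = -F^*\der\alpha$ independently of $t$; integrating from $0$ gives $(\phi^t)^*\om = \om - t\,F^*\der\alpha$, and setting $t = 2\pi$ and using $\Psi_{2\pi\alpha} = \identity$ yields $F^*\der\alpha = 0$ on $F^{-1}(U)$. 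At any $p \in M$ where $\der F_p$ is surjective the map $(\der F_p)^*$ is injective on $2$-forms, so $(\der\alpha)_{F(p)} = 0$; since $F$ is an almost everywhere submersion the set of such $p$ is dense in $M$, so its image is dense in $B$, and therefore $\der\alpha$ vanishes on a dense subset of $U$ and hence, by continuity, on all of $U$.

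The step I expect to be the main obstacle is precisely this last one---proving that period forms are closed, and in particular that $F^*\der\alpha = 0$ genuinely forces $\der\alpha = 0$, which relies on the density of the regular values in $B$ coming from $F$ being an almost everywhere submersion with compact fibers. The remaining assertions are bookkeeping with \cref{def:affine-structures} and the fiberwise description of $\Psi$ established in the first step.
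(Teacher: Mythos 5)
Your proof is correct and follows essentially the same route as the paper: the implications $\check L^F \Rightarrow \Period^F \Rightarrow \calA^F$ are read off from \cref{def:affine-structures}, and $\calA^F \Rightarrow \Period^F$ is obtained by noting that period forms are closed, hence exact on a base of contractible (shrinkable) open sets. The only difference is that you supply proofs of two facts the paper asserts without argument---the fiberwise description of $\Psi_{2\pi\alpha}$ giving $\Period^F(U) = \Gamma(U; \Res{\check L^F}_U)$, and the inclusion $\Period^F(U) \subseteq \rmZ^1(U)$ via $(\Psi_{2\pi\alpha})^*\om = \om - 2\pi F^*\der\alpha$ and density of regular points---both of which are sound.
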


\begin{proof}
  By \cref{def:affine-structures}, given an open subset $U$ of $B$, $\beta \in \Period^F(U)$ iff $\beta_b \in \check L^F_b$ for any $b \in U$, and $A \in \calA^F(U)$ iff $\der A \in \Period^F(U)$.
  Then $\check L^F$ determines $\Period^F$, and $\Period^F$ determines $\calA^F$.
  Let $U$ be a shrinkable open subset of $B$.
  Since $\Period^F(U) \subseteq \rmZ^1(U) = \rmB^1(U)$, then $\beta \in \Period^F(U)$ iff there is $A \in \calA^F(U)$, uniquely up to adding a real number, such that $\der A = \beta$.
  Then $\calA^F$ determines $\Period^F$ as shrinkable open subsets form a topological base of $B$.
\end{proof}

\subsection{Local symplectic and affine normal forms}
\label{ssec:local-normal-form}

By Eliasson's linearization theorem for non-degenerate focus-focus points~\cite{Eliasson1984, MR3098203}, there is no local invariant near a singular point.

\begin{definition} \label{def:eliasson-focus}
  An \emph{Eliasson local chart} at a focus-focus point $p \in M$ of $F$ is an isomorphism $\psi$ from the integrable system $\Res{F}_V$ on $(V, \om)$ to $\Res{q}_{V_0}$ on $(V_0, \omc)$ where $V \in \calN(M, p)$, $V_0 \in \calN(\R^4, 0)$, and $q$ is given by
  \begin{equation} \label{def:local-model-focus}
    q(x_1, y_1, x_2, y_2) = (x_1 y_2 - x_2 y_1, x_1 y_1 + x_2 y_2).
  \end{equation}
  In other words, the following diagram commutes
  \begin{equation*}
    \xymatrix{
      (V, \om) \ar[r]^{\psi} \ar[d]^{F} & (V_0, \omc) \ar[d]^{q} \\
      U \ar[r]^{E} & U_0
    }.
  \end{equation*}
  where $E$ is a diffeomorphism from $U \in \calN(B, 0)$ to $U_0 \in \calN(\R^2, 0)$.
\end{definition}

\begin{theorem} \label{thm:eliasson-focus}
  Let $p \in M$ be a focus-focus point of an integrable system $F$ on $(M, \om)$.
  \begin{itemize}
    \item There exists an Eliasson local chart at $p$.
    \item If $\psi$ and $\psi'$ are two Eliasson local charts at $p$ lifting $E$ and $E'$ respectively, then $G = E' \circ E^{-1}$ has the form $G(c^1, c^2) = (\epsilon_1 c^1, \epsilon_2 c^2 + \inftes)$, with $\epsilon_i = \pm 1$, $i = 1, 2$.
    \item If $\psi$ is an Eliasson local chart at $p$ lifting $E$ and $G(c^1, c^2) = (\epsilon_1 c^1, \epsilon_2 c^2 + \inftes)$, with $\epsilon_i = \pm 1$, $i = 1, 2$, then there is an Eliasson local chart $\psi'$ at $p$ lifting $E' = G \circ E^{-1}$.
  \end{itemize}
\end{theorem}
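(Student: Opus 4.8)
The plan is to deduce all three bullets from Eliasson's linearization theorem \cite{Eliasson1984, MR3098203} together with a careful analysis of the automorphisms of the local model $q$. For the first bullet, I would simply invoke Eliasson's theorem, which asserts that a non-degenerate focus-focus point has a local symplectomorphism bringing $F$ into the normal form $q$ of \eqref{def:local-model-focus}; the induced map $E$ on the base then exists because $F$ and $q \circ \psi$ have the same (connected) fibers near $p$, so $\psi$ descends to a diffeomorphism of a neighborhood of $0$ in $B$ onto a neighborhood of $0$ in $\R^2$.

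For the second bullet, given two Eliasson charts $\psi, \psi'$, consider $\phi = \psi' \circ \psi^{-1}$, a local symplectomorphism of $(V_0, \omc)$ fixing $0$ and satisfying $q \circ \phi = G \circ q$ for $G = E' \circ E^{-1}$. The key step is to understand which germs of base diffeomorphisms $G$ can arise this way. Writing $q = (q_1, q_2)$ where $q_1 = x_1 y_2 - x_2 y_1$ generates a $2\pi$-periodic $\bbS^1$-action (rotation) and $q_2 = x_1 y_1 + x_2 y_2$, one observes that $q_1$ is preserved up to sign by linear symplectic automorphisms commuting with the foliation, while the Hamiltonian flow of $q_2$ is a non-periodic hyperbolic-type flow. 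The first component of $G$ must therefore be $c^1 \mapsto \epsilon_1 c^1$ with $\epsilon_1 = \pm 1$ (the sign records whether the $\bbS^1$-action is preserved or reversed). For the second component, I would differentiate $q \circ \phi = G \circ q$ at $0$ and use that the linearization of $\phi$ lies in the subgroup of $\Sp(4, \R)$ preserving the eigenspace decomposition of the focus-focus linear model; an inspection of this subgroup shows the linear part of $G$ in the $c^2$-direction is $\pm 1$, and higher-order terms are unconstrained, giving $G(c^1, c^2) = (\epsilon_1 c^1, \epsilon_2 c^2 + \inftes)$.

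For the third bullet, I would run the construction in reverse: given $\psi$ lifting $E$ and a prescribed $G$ of the stated form, it suffices to produce a local symplectomorphism $\phi$ of $(V_0, \omc)$ with $q \circ \phi = G \circ q$, and then set $\psi' = \phi \circ \psi$. The signs $\epsilon_1, \epsilon_2$ are realized by explicit linear symplectic maps: $\epsilon_1 = -1$ by swapping $(x_1, y_1) \leftrightarrow (x_2, y_2)$ (or a similar involution reversing $q_1$), and $\epsilon_2 = -1$ by a map reversing $q_2$; the infinitely-flat correction term in the $c^2$-direction is realized by composing with the time-$t$ flow of a suitable Hamiltonian that is a flat function of $q$, which acts trivially to all orders on the linear model but adjusts $q_2$ by the prescribed flat amount. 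The main obstacle I anticipate is the second bullet — precisely pinning down that the $c^2$-component of $G$ can only be perturbed by an infinitely-flat function and not, say, by a nontrivial smooth rescaling. This requires knowing that any local symplectomorphism intertwining two copies of $q$ must, after subtracting its linear part, differ from the identity by something that acts trivially on $q$ modulo flat functions; this is exactly the content of the rigidity in Eliasson's / Vũ Ngọc–Wacheux's refinement of the linearization theorem, which I would cite rather than reprove.
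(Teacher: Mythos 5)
Your proposal is essentially the same as the paper's proof, which consists of a single citation to Lemmas 4.1 and 5.1 of~\cite{MR1941440}: your sketch (existence via Eliasson's linearization, the sign $\epsilon_1$ from the periodicity of the flow of $q_1$, realizability of the signs and of flat corrections by explicit symplectomorphisms, and the flatness rigidity for the $c^2$-component deferred to the literature) is precisely the content of those lemmas. The only caveat is that your phrase ``higher-order terms are unconstrained'' in the second bullet is misleading---the substance of that bullet is exactly that the nonlinear terms \emph{are} constrained to be flat---but you acknowledge this in your closing paragraph and correctly resolve it by citing the rigidity statement rather than the linearized computation.
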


\begin{proof}
  See~\cite[Lemma 4.1, Lemma 5.1]{MR1941440}.
\end{proof}

\begin{corollary} \label{cor:eliasson-focus-stor}
  Let $p \in M$ be a focus-focus point of a semitoric system $F$ on $(M, \om)$.
  \begin{itemize}
    \item There exists an Eliasson local chart $\psi$ at $p$ lifting $E$ such that $E$ preserves the orientation and acts on the abscissa by translation
    \begin{equation*}
      \proj_1 \circ E = \proj_1 - \proj_1(p).
    \end{equation*}
    \item If $\psi$ and $\psi'$ are two Eliasson local charts at $p$ lifting $E$ and $E'$respectively, such that $E$ and $E'$ preserve the orientation and act on the abscissa by translation then $G = E' \circ E^{-1}$ has the form $G(c^1, c^2) = (c^1, c^2 + \inftes)$.
  \end{itemize}
\end{corollary}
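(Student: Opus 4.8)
The plan is to build on \cref{thm:eliasson-focus}; the only missing ingredient is the $\bbS^1$-symmetry of a semitoric system $F = (J, H)$, namely that $J$ has a $2\pi$-periodic flow and, in the Eliasson local model $q = (q_1, q_2)$ of \cref{def:eliasson-focus}, so does $q_1 = x_1 y_2 - x_2 y_1$. First I would prove that for \emph{every} Eliasson local chart $\psi$ at $p$, lifting $E = (E^1, E^2)$, the first component is rigid: $E^1 = \epsilon_1(\proj_1 - \proj_1(p))$ with $\epsilon_1 \in \{\pm 1\}$. Then I would use the last item of \cref{thm:eliasson-focus} to normalize $\epsilon_1$ to $+1$ and, independently, to fix the orientation of $E$.

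For the rigidity, the commuting diagram in \cref{def:eliasson-focus} gives $\psi^* q_1 = E^1 \circ F$ on $V$, hence $F^*(\der E^1) = \der (\psi^* q_1) = \psi^*(\der q_1)$. Since the period lattice is a fiberwise-symplectic invariant and $\psi$ carries $\Res{F}_V$ to $\Res{q}_{V_0}$, the diffeomorphism $E$ pulls $\check L^q$ back to $\check L^F$; because the $\bbS^1$-action generated by $q_1$ is free off the origin, $\der \proj_1$ is a \emph{primitive} section of $\check L^q$, so $\der E^1 = E^*(\der \proj_1)$ is a section of $\check L^F$ over a neighborhood $U$ of the node $F(p)$, primitive over the regular values. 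On the punctured neighborhood $U \setminus \{F(p)\}$ the period lattice is a rank-two local system whose monodromy around the node is a transvection, and $\der J$ — which extends across the node because $J$ is globally defined with $2\pi$-periodic flow — is a primitive section generating the monodromy-invariant sublattice; hence every section over $U \setminus \{F(p)\}$ is an integer multiple of $\der J$, so $\der E^1 = k\, \der J$ for some $k \in \Z$, and primitivity forces $k = \pm 1$. Since $E$ takes $F(p)$ to the origin, the integration constant vanishes and $E^1 = \pm(\proj_1 - \proj_1(p))$.

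Granting this, start from an arbitrary Eliasson chart at $p$: if its $\epsilon_1 = -1$, the last item of \cref{thm:eliasson-focus} applied with $G(c^1, c^2) = (-c^1, c^2)$ produces a new chart whose lift has first component $\proj_1 - \proj_1(p)$, and if that $E$ reverses orientation a further application with $G(c^1, c^2) = (c^1, -c^2)$, which leaves the first component alone, makes $E$ orientation preserving; this gives the first assertion. For the second, let $\psi, \psi'$ be two charts as in the statement, lifting $E, E'$, and put $G = E' \circ E^{-1}$. The second item of \cref{thm:eliasson-focus} gives $G(c^1, c^2) = (\epsilon_1 c^1, \epsilon_2 c^2 + \inftes)$ with $\epsilon_i \in \{\pm 1\}$. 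From $G \circ E = E'$ and $\proj_1 \circ E = \proj_1 \circ E' = \proj_1 - \proj_1(p)$ one reads off $\proj_1 \circ G = \proj_1$, so $\epsilon_1 = 1$; since $E$ and $E'$, hence $G$, preserve orientation and $\inftes$ has trivial $1$-jet, the Jacobian determinant of $G$ at the origin is $\epsilon_1 \epsilon_2 = \epsilon_2 > 0$, so $\epsilon_2 = 1$. Therefore $G(c^1, c^2) = (c^1, c^2 + \inftes)$.

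The crux is the rigidity of $E^1$: it is the single point where information beyond \cref{thm:eliasson-focus} is needed, and it rests on the classical fact that near a focus-focus node $\der J$ is, up to sign, the unique primitive period form extending across the node — equivalently, that the monodromy of $\check L^F$ is a transvection with invariant line $\R\, \der J$. Once that is in place, the remainder is sign bookkeeping handled entirely by \cref{thm:eliasson-focus}.
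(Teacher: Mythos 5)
Your proof is correct and is, in substance, the argument the paper intends: the paper's own proof is a one-line citation of \cref{thm:eliasson-focus} and \cref{def:isom-int-sys-stor}, and the content you supply --- that for \emph{any} Eliasson chart the first component of $E$ is forced to be $\pm(\proj_1 - \proj_1(p))$ because $\der E^1$ is a single-valued primitive period form near the node, hence lies in the monodromy-invariant line $\R\,\der J$ --- is exactly the classical rigidity statement from~\cite{MR1941440} that underlies the corollary. The sign bookkeeping via the second and third items of \cref{thm:eliasson-focus} is also right (note the paper's third item has an apparent typo, $E' = G \circ E^{-1}$ rather than $G \circ E$; you use it in the corrected form). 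One step deserves tightening: the assertion that $E$ pulls $\check L^{q}$ back to $\check L^F$ is not literally correct, since $\psi$ only identifies $\Res{F}_V$ with $\Res{q}_{V_0}$ on a neighborhood of the \emph{point} $p$, not of the fiber, and the local fibers of $q$ are non-compact pieces of the fibers of $F$. What you actually need are the two one-sided implications: (i) the $2\pi$-periodicity of $X_{E^1 \circ F} = \psi^* X_{q_1}$ on the piece $F^{-1}(b) \cap V$ of a regular fiber propagates to the whole torus $F^{-1}(b)$ because $X_{\beta_b}$ is invariant under the transitive $T^*_bB$-action, which gives $\der E^1 \in \check L^F$ over the regular values; and (ii) primitivity of $\der E^1$ in $\check L^F$ follows because $\frac{1}{k}\der E^1 \in \check L^F_b$ would make the time-$2\pi/k$ flow of $X_{q_1}$ the identity on a piece of a regular fiber of $q$, contradicting freeness of the rotation off the origin. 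With that sentence added, the argument is complete.
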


\begin{proof}
  This is a corollary of \cref{thm:eliasson-focus,def:isom-int-sys-stor}.
\end{proof}

\begin{definition} \label{def:inf-jet-positive}
  Let $\Tl_0 \colon \Diff_0(\R^2) \to \C[[Z, \cj{Z}]]$ be taking the infinite jet at the origin, then $\Tl_0$ is a surjective group homomorphism.
  Let $\sfV = \Tl_0(\frakV)$ and $\sfV_+ = \Tl_0(\frakV_+)$.
\end{definition}

Let $F = (J, H)$ be a semitoric system on $(M, \om)$ and let $B = F(M)$.
Let $M_\rmf \subset M$ be the set of \emph{focus-focus points} of $F$ and let $B_\rmf = F(M_\rmf)$ be called the set of \emph{nodes} of the \emph{base} $(B, \Period^F)$.
Arrange $B_\rmf$ lexicographically into $(\tilde{c}_i)_{i = 1}^\vf$ and let the fiber at $\tilde{c}_i$ has multiplicity $m_i \in \N$.
Let $B_+ = B \setminus \bigcup_{i = 1}^\vf \ell_{\tilde{c}_i}^+$.
By~\cite[Proposition 2.5]{2019arXiv190903501v3P} there exists an injective, orientation preserving, continuous function $A_+ \colon B \to \R^2$, called a choice of \emph{piecewise affine coordinates} as in~\cite[Definition 2.6]{2019arXiv190903501v3P} with the following properties:
\begin{enumerate}
  \item $A_+$ preserves the abscissa;
  \item the restriction $A_+|_{B_+}$ of $A_+$ to $B_+$ is smooth;
  \item $\Res{A_+}_{B_+}^* \check L^p = \check L^F$, where $\check L^p = \R^2 \times \Z^2$ is the standard lattice on $\R^2$ given by the projection $p \colon (\R^2 \times T^* (\R/2\pi\Z)^2, \omc) \to \R^2$.
\end{enumerate}
Let $\ln_+ \colon \C \setminus \imag \R^+ \to \C$ be the determination of $\ln$ with $\ln_+ 1 = 0$ and branch cut at $\imag \R^+$.
Let $K_+ \colon \C \setminus \imag \R^+ \to \R$ be given by
\begin{equation*}
  K_+ (c) = -\Im (c \ln_+ c - c).
\end{equation*}

The correspondence in \cref{thm:markedpoly-classification} is the map $\tilde{\rmi}$ in \cite[(3.1)]{2019arXiv190903501v3P} with all the lower wall-crossing indices equal to zero.
More concretely, we have $\De = A_+(B)$ and $(c_i)_{i = 1}^\vf = (A_+(\tilde{c}_i))_{i = 1}^\vf$.
Consider a node $\tilde{c}_i \in B_\rmf$ and let $(p^i_j)_{j \in \Z_{m_i}}$ be the tuple of focus-focus points in $F^{-1}(\tilde{c}_i)$ in order according to the direction of the flow of $X_H$.
At any $p^i_j$ let $E^i_j \colon U_i \to \R^2 \cong \C$ be an Eliasson local chart satisfying the first item of \cref{cor:eliasson-focus-stor}.
Let $\wt S^i \colon U_i \to \R$ be the smooth extension of
\begin{equation} \label{eq:def-S-tilde}
  2\pi \proj_2 \circ A_+ - \sum_{j \in \Z_{m_i}} (E^i_j)^* K_+ + 2\pi \sum_{1 \leq k < i, c_k^1 = c_i^1} m_k (c_i^1 - \proj_2)\Heaviside_{(c_i^1 - \proj_1)},
\end{equation}
where $\Heaviside_x$ is $1$ when $x \geq 0$ and $0$ for $x < 0$, which is the Heaviside function.
The complete focus-focus label $\tsfl^i = [\tsfs^i_j, \sfg^i_{j, \ell}]_{j, \ell \in \Z_{m_i}}$ is given by
\begin{equation*} \begin{aligned}
  \tsfs^i_j(X, Y) = \Tl_0[\wt S^i\circ (E^i_j)^{-1}], \qquad \sfg^i_{j, \ell}(X, Y) = \Tl_0[\proj_2 \circ E^i_\ell \circ (E^i_j)^{-1}],
\end{aligned} \end{equation*}
for $j, \ell \in \Z_{m_i}$.

By the action-angle theorem, the $i$-stratum $B^{(i)}$ of $B \setminus B_\rmf$, $i \in \Set{0, 1, 2}$, is the set of values over which the points have Eliasson type $\bfk = (i, n-i, 0, 0)$.
Then there is a (set-theoretic) decomposition
\begin{equation*}
  B = B^{(2)} \sqcup B^{(1)} \sqcup B^{(0)} \sqcup B_\rmf.
\end{equation*}

Let $\kappa$ denote the multi-valued $1$-form on $\R^2 \setminus \Set{0}$ given by
\begin{equation*}
  \kappa = -\Im \ln c \der c^1 - \Re \ln c \der c^2
\end{equation*}
with monodromy equal to $-2\pi \der c^1$, as in \cite[(2.4)]{PeTa2018}.

\begin{theorem} \label{thm:affine-local-form}
  Let $F = (J, H)$ be a semitoric system on $(M, \om)$ of dimension four and let $B = F(M)$.
  Then for any $i \in \Seq{1, \ldots, \vf}$ there is $\wt S^i \colon U_i \to \R$ as in \cref{eq:def-S-tilde} and the period lattice $\check L^F$ has local normal forms at $b \in B$ as
  \begin{equation*}
    \begin{cases}
      \check L^F_b \simeq \Z^2, & b \in B^{(2)} \\
      \check L^F_b \simeq \R \times \Z, & b \in B^{(1)} \\
      \check L^F_b = T^*_b B \simeq \R^2, & b \in B^{(0)}; \\
      \check L^F_b = (\der c^1)_b \Z \oplus \frac{1}{2\pi}((\der \wt S^i)_b + \sum_{j \in \Z_{m_i}} (E^i_j)^* \kappa_b) \Z & b \in U_i \setminus \Set{\tilde{c}_i} \\
      \check L^F_b = (\der c^1)_b \Z, & b \in B_\rmf
    \end{cases}
  \end{equation*}
  where $\kappa_b$ is any representative of $\kappa$ at $b$.
  In particular, the period lattice $\check L^F$, the period sheaf $\Period^F$, and the action sheaf $\calA^F$ determine one another.
\end{theorem}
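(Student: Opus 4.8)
The plan is to establish the five local normal forms one stratum at a time, in each case passing to the symplectic normal form fitted to the type of $b$ so that the defining condition $\Psi_{2\pi \alpha_b} = \identity_{F^{-1}(b)}$ of $\check L^F_b$ turns into an explicit question about which flows on the fibre $F^{-1}(b)$ are periodic; the closing assertion then follows by combining these normal forms with the preceding proposition.

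Over the regular stratum $B^{(2)}$ the action--angle theorem gives a saturated neighbourhood of $b$ with action coordinates $A = (A_1, A_2)$ for which $(\der A_1|_b, \der A_2|_b)$ is a $\Z$-basis of $\check L^F_b$ and $F^{-1}(b)$ is a $2$-torus, so $\check L^F_b \simeq \Z^2$. Over $B^{(1)}$ and $B^{(0)}$ I would use Eliasson's linearization theorem \cite{Eliasson1984} at elliptic-regular and elliptic-elliptic points: near $b$ the system splits off an elliptic block with Hamiltonian $q(x,y) = \frac{1}{2}(x^2 + y^2)$ whose flow is a genuine $2\pi$-periodic $\bbS^1$-action fixing the locus $\{q = 0\}$ pointwise, so every real multiple of the period covector of that block acts as the identity on $F^{-1}(b)$, while a regular block still contributes only a $\Z$. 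This gives $\check L^F_b \simeq \R \times \Z$ over $B^{(1)}$, and over $B^{(0)}$ the fibre $F^{-1}(b)$ is a single point, so $\check L^F_b = T^*_b B \simeq \R^2$ at once.

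For a node $\tilde{c}_i$, recall from \cite{MR1389366} that $F^{-1}(\tilde{c}_i)$ is a torus with $m_i$ transverse self-intersections, so $C \defeq F^{-1}(\tilde{c}_i) \setminus M_\rmf$ is a disjoint union of $m_i$ cylinders whose points are all regular points of $F$. Since $c^1 \circ F = J$ we have $X_{\der c^1} = X_J$, whose flow is globally $2\pi$-periodic and fixes $M_\rmf$, so $(\der c^1)_{\tilde{c}_i}\Z \subseteq \check L^F_{\tilde{c}_i}$, and it restricts to a free $\bbS^1$-action on $C$, ruling out any proper fraction of $(\der c^1)_{\tilde{c}_i}$. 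To see nothing else occurs, write a covector at $\tilde{c}_i$ as $a\,\der c^1 + b\,\der c^2$; on a cylinder component pick coordinates $(\theta, s)$ with $X_J = \partial_\theta$, so that $[X_J, X_H] = 0$ forces $X_H = f(s)\,\partial_\theta + g(s)\,\partial_s$ with $g$ nowhere zero because $X_J$ and $X_H$ are linearly independent at regular points. Then for $b \neq 0$ the flow of $a X_J + b X_H$ moves the coordinate $s$ strictly monotonically, so it is never the identity on $C$; hence $\check L^F_{\tilde{c}_i} = (\der c^1)_{\tilde{c}_i}\Z$.

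Finally, for $b \in U_i \setminus \Set{\tilde{c}_i}$ every value is regular, so $\check L^F_b$ has rank two and it remains to exhibit a $\Z$-basis, one generator being $(\der c^1)_b$. For the other I would use the semi-local normal form at a focus-focus fibre of \cite{MR1941440, PeTa2018} together with the piecewise affine coordinates $A_+$ and the function $\wt S^i$ of \cref{eq:def-S-tilde}, whose smooth existence is recalled in the preamble: the third property of $A_+$ makes $\proj_2 \circ A_+$ an action coordinate on $B_+ \cap U_i$, a direct computation gives $\der K_+ = \kappa$ as multivalued $1$-forms, and then \cref{eq:def-S-tilde} shows $\der(\proj_2 \circ A_+)$ equals $\frac{1}{2\pi}\bigl((\der \wt S^i)_b + \sum_{j \in \Z_{m_i}} (E^i_j)^* \kappa_b\bigr)$ up to an element of $(\der c^1)_b\Z$ (the correction term of \cref{eq:def-S-tilde} contributes an integer multiple of $\der c^1$, and so, after the factor $\frac{1}{2\pi}$, does the monodromy $-2\pi\,\der c^1$ of $\kappa$). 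Since $\wt S^i$ is smooth on all of $U_i$ and $\check L^F$ is a continuous lattice over $U_i \setminus \Set{\tilde{c}_i}$, this basis extends across the cut, with $\kappa_b$ any representative. For the closing statement, the preceding proposition already shows that $\check L^F$ determines $\Period^F$ and that $\Period^F$ and $\calA^F$ determine each other, so it remains only to recover $\check L^F$ from $\Period^F$; but the list just proved expresses $\check L^F_b$, at each $b$, through the germ of $\Period^F$ at $b$ (i.e.\ the local action coordinates) together with the position of $b$ in the intrinsic stratification of $(B, \calA^F)$ as a tropical affine manifold with corners and singularities---replacing the $\Z$ by $\R$ in the directions conormal to the boundary faces through $b$, and over $B_\rmf$ keeping the rank-one sublattice fixed by the local monodromy of $\Period^F$---so all three mutually determine one another. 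I expect the focus-focus case to be the main obstacle: identifying the intrinsically defined $\check L^F$ near a node with the explicit semitoric data requires importing the semi-local and global semitoric theory of Vu Ngoc, Pelayo--Vu Ngoc and Pelayo--Tang, notably the existence of the smooth extension $\wt S^i$ in which the logarithmic singularities $(E^i_j)^* K_+$ exactly offset those of $2\pi\,\proj_2 \circ A_+$, and at the node itself one must reason on the fibre $F^{-1}(\tilde{c}_i)$, which is not a submanifold of $M$.
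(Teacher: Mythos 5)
Your proposal is correct, but it is doing considerably more work than the paper, which disposes of the local normal forms in one line by citing~\cite{2019arXiv190903501v3P} and then handles the mutual determination by asserting that $(\check L^F, \check \pi^F)$ is the \'etal\'e space of $\Period^F$. You instead derive the five normal forms stratum by stratum (action--angle over $B^{(2)}$, Eliasson's elliptic blocks over $B^{(1)}$ and $B^{(0)}$, Zung's description of the pinched torus plus the commutation argument $[X_J, X_H]=0$ at the nodes, and the piecewise affine coordinates $A_+$ together with $\der K_+ = \kappa$ over $U_i \setminus \Set{\tilde{c}_i}$); this is essentially a proof of the cited input rather than an appeal to it, and each step checks out, with the existence of the smooth extension $\wt S^i$ correctly flagged as the one place where the semitoric literature must still be imported (the paper defers to the same source there). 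For the closing claim your route is genuinely different from, and arguably more careful than, the paper's: the \'etal\'e-space argument implicitly needs every element of $\check L^F_b$ to be the value of a local section of $\Period^F$, which fails for the $\R$-factors over $B^{(1)}$ and $B^{(0)}$ (a germ of a period form must lie in the rank-two integral lattice over the nearby regular values, so non-integral multiples of the elliptic covector are not attained); your reconstruction of $\check L^F$ from the germs of $\Period^F$ together with the intrinsic stratification of $B$---saturating by $\R$ in the conormal directions to the boundary faces and keeping the monodromy-invariant sublattice over $B_\rmf$---sidesteps this and delivers the same conclusion.
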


\begin{proof}
  The local normal forms of $\check L^F$ are a result of \cite{2019arXiv190903501v3P}.
  Since for every $\beta_b \in \check L^F_b$ there is a $U \in \calN(B, b)$ and a $\beta \in \Gamma(U; \Res{\check L^F}_U)$ with value $\beta_b$ at $b$, and that
  \begin{equation*}
    \Period^F(U) = \Gamma(U; \textstyle\Res{\check L^F}_U),
  \end{equation*}
  we obtain that $(\check L^F, \check \pi^F)$ is the \'etal\'e space of $\Period^F$, which is, in particular, determined by $\Period^F$.
\end{proof}

\subsection{Smooth isomorphisms of the singular affine structure}
\label{ssec:isomorphism-of-affine-structure}

\begin{theorem} \label{thm:lattice-bundle-period-action-stor}
  Let $F$ be a semitoric system on $(M, \om)$ and $F'$ on $(M', \om')$ of dimension four.
  Then the following are equivalent: $G^* \check L^{F'} = \check L^F$, $G^* \Period^{F'} = \Period^F$, and $G^* \calA^{F'} = \calA^F$.
\end{theorem}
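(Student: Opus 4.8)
The plan is to deduce the three equivalences from Theorem~\ref{thm:affine-local-form}, which already records that the single triple $(\check L^F, \Period^F, \calA^F)$ determines one another via pointwise stalk conditions: $\beta \in \Period^F(U)$ iff $\beta_b \in \check L^F_b$ for all $b \in U$, and $A \in \calA^F(U)$ iff $\der A \in \Period^F(U)$, together with the fact that $(\check L^F, \check\pi^F)$ is the étalé space of $\Period^F$. So the statement is really that a fixed diffeomorphism $G \colon B = F(M) \to B' = F'(M')$ pulls back one of the three structures to the corresponding structure on $B$ if and only if it does so for any other of the three. I would prove the cycle of implications $G^*\check L^{F'} = \check L^F \Rightarrow G^*\Period^{F'} = \Period^F \Rightarrow G^*\calA^{F'} = \calA^F \Rightarrow G^*\check L^{F'} = \check L^F$.

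First, suppose $G^*\check L^{F'} = \check L^F$ as subsets of $T^*B$ (where $G^*$ denotes the pullback of subsets of $T^*B'$ along the cotangent lift of $G$, which is the diffeomorphism $T^*B' \to T^*B$ covering $G^{-1}$). For an open $U \subseteq B$ and $\beta \in \Om^1(U)$, the section $\beta$ lies in $\Period^F(U)$ iff $\beta_b \in \check L^F_b$ for every $b \in U$; applying the hypothesis stalk by stalk, this holds iff $(G^{-1})^*\beta$ has all its values in $\check L^{F'}$, i.e.\ iff $(G^{-1})^*\beta \in \Period^{F'}(G(U))$. Hence $G^*\Period^{F'} = \Period^F$. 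For the second implication, if $G^*\Period^{F'} = \Period^F$ and $A \in \calA^F(U)$, then $\der A \in \Period^F(U)$, so $(G^{-1})^*\der A = \der(A \circ G^{-1}) \in \Period^{F'}(G(U))$, whence $A \circ G^{-1} \in \calA^{F'}(G(U))$; the converse direction is symmetric using $G^{-1}$, giving $G^*\calA^{F'} = \calA^F$.

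The last implication, $G^*\calA^{F'} = \calA^F \Rightarrow G^*\check L^{F'} = \check L^F$, is the one requiring a little care, since recovering the period lattice from the action sheaf uses that shrinkable (contractible) open sets form a base and that on such sets $\Period^F(U) = \rmZ^1(U) = \rmB^1(U)$, so every period form is $\der A$ for some local action coordinate $A$, unique up to an additive constant. Thus, given $b \in B$ and $\beta_b \in \check L^F_b$, choose a shrinkable $U \in \calN(B,b)$ and a period form $\beta \in \Period^F(U)$ with $\beta_b$ as its value at $b$ (this extension exists by the étalé-space description in Theorem~\ref{thm:affine-local-form}); write $\beta = \der A$ with $A \in \calA^F(U)$. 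Then $A \circ G^{-1} \in \calA^{F'}(G(U))$ by hypothesis, so $\der(A \circ G^{-1}) \in \Period^{F'}(G(U))$, and evaluating at $G(b)$ shows the $G$-pullback of $\beta_b$ lies in $\check L^{F'}_{G(b)}$. This proves $\check L^F \subseteq G^*\check L^{F'}$, and the reverse inclusion follows by the same argument applied to $G^{-1}$.

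The only genuine obstacle is bookkeeping: one must check that ``$G^*$'' is interpreted consistently as the cotangent-lift pullback on all three objects (lattice as a subset of $T^*B$, sheaf of $1$-forms, sheaf of functions), and that the stalkwise arguments are legitimate — i.e.\ that membership in $\Period^F(U)$ and in $\calA^F(U)$ is detected locally. Both are immediate from Definition~\ref{def:affine-structures}: $\Period^F$ and $\calA^F$ are sheaves, and $\check L^F$ is defined fiberwise, so no global obstruction (such as monodromy around the nodes $B_\rmf$) enters — monodromy is a feature of the individual structures, not of the equivalence among them. Hence the proof is a formal consequence of the mutual determination already established, and no new analytic input beyond Theorem~\ref{thm:affine-local-form} is needed.
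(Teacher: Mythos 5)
Your proposal is correct and takes essentially the same route as the paper: the paper simply declares the theorem a corollary of \cref{thm:affine-local-form} (the mutual determination of $\check L^F$, $\Period^F$, and $\calA^F$), and your cycle of implications is the natural spelling-out of why that determination is compatible with pullback along a fixed diffeomorphism $G$. The care you take with the third implication (using shrinkable opens and the \'etal\'e-space description to recover the lattice from the action sheaf) matches the mechanism the paper relies on in its earlier Proposition in \cref{ssec:affine-structure}.
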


\begin{proof}
  This is a corollary of \cref{thm:affine-local-form}.
\end{proof}

\begin{corollary}
  Let $F$ be a semitoric system on $(M, \om)$ and $F'$ on $(M', \om')$ and let $B = F(M)$ and $B' = F'(M')$ of dimension four.
  Then there is a diffeomorphism $G \colon B \to B'$ with $G^* \check L^{F'} = \check L^F$ iff there are complete semitoric ingredient representatives
  \begin{equation*}
    \Pa{\De, (c_i)_{i = 1}^\vf, \Pa{[\tsfs^i_j, \sfg^i_{j, \ell}]_{j, \ell \in \Z_{m_i}}}_{i = 1}^\vf}, \Pa{\De', (c'_i)_{i = 1}^\vf, \Pa{[\tsfs^{\prime i}_j, \sfg^{\prime i}_{j, \ell}]_{j, \ell \in \Z_{m'_i}}}_{i = 1}^\vf}
  \end{equation*}
  where $\De = \De'$, $(c_i)_{i = 1}^\vf = (c'_i)_{i = 1}^\vf$, and with $(\tilde{c}_i)_{i = 1}^\vf$ the lexicographic ordering of $B_\rmf$ and $(\tilde{c}'_i)_{i = 1}^\vf$ of $B'_\rmf$, there is $U_i \in \calN(B, \tilde{c}_i)$, $U'_i \in \calN(B', \tilde{c}'_i)$, and a diffeomorphism $G_i \colon U_i \to U'_i$ such that $G_i^* \Res{\check L^{F'}}_{U'_i} = \Res{\check L^F}_{U_i}$ for any $i \in \Seq{1, \ldots, \vf}$.
\end{corollary}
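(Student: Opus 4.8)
The plan is to reduce the statement to the explicit construction of a complete semitoric ingredient representative recalled just before \cref{thm:affine-local-form}: that construction reads a representative off from the period lattice together with a choice of piecewise affine coordinates $A_+ \colon B \to \De = A_+(B)$ and of Eliasson charts $E^i_j$ normalised as in \cref{cor:eliasson-focus-stor}. Combined with the local normal forms of \cref{thm:affine-local-form}, and with the fact from \cite{2019arXiv190903501v3P,PeTa2018} that the ``twisting index'' ambiguity of $A_+$ around a node is realised by a genuine smooth fibre-preserving self-diffeomorphism of $B$, this will give both implications. Throughout I normalise $G$ (using the translations and shears already appearing in \cref{thm:markedpoly-classification}, and a reflection if needed) so that it preserves the abscissa together with its orientation; the remaining cases differ only by these symmetries.

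For the forward implication, suppose $G \colon B \to B'$ is a diffeomorphism with $G^* \check L^{F'} = \check L^F$. By \cref{thm:affine-local-form} the isomorphism type of the germ of $\check L^F$ at a point distinguishes the strata $B^{(2)}, B^{(1)}, B^{(0)}$ from the node set $B_\rmf$, and the monodromy of $\check L^F$ around a node of multiplicity $m$ is conjugate in $\rmGL(2, \Z)$ to $T^m$; hence $G$ carries each stratum of $B$ onto the corresponding stratum of $B'$ and $B_\rmf$ onto $B'_\rmf$ preserving multiplicities, so $\vf$ is the same for $B$ and $B'$, and after reordering, $G(\tilde{c}_i) = \tilde{c}'_i$ and $m_i = m'_i$. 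Now fix a choice $A_+$ of piecewise affine coordinates for $F$ and set $A'_+ \defeq A_+ \circ G^{-1}$; since $G$ is an orientation-preserving diffeomorphism preserving the abscissa, the nodes, and the upward vertical rays, $A'_+$ is a valid choice of piecewise affine coordinates for $F'$, with $A'_+(B') = A_+(B) = \De$ and $A'_+(\tilde{c}'_i) = A_+(\tilde{c}_i) = c_i$. Running the construction before \cref{thm:affine-local-form} for $F$ with $A_+$ and for $F'$ with $A'_+$ produces complete semitoric ingredient representatives with $\De = \De'$ and $(c_i)_{i = 1}^\vf = (c'_i)_{i = 1}^\vf$, while $G_i \defeq \Res{G}_{U_i}$ on a small $U_i \in \calN(B, \tilde{c}_i)$ provides the required local diffeomorphisms.

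For the converse, suppose we are given the two representatives and the local diffeomorphisms $G_i \colon U_i \to U'_i$. Pick piecewise affine coordinates $A_+ \colon B \to \De$ and $A'_+ \colon B' \to \De' = \De$ realising these polygons, so that $\Res{A_+}_{B_+}^* \check L^p = \check L^F$ and $\Res{A'_+}_{B'_+}^* \check L^p = \check L^{F'}$, and put $G \defeq (A'_+)^{-1} \circ A_+$, a homeomorphism $B \to B'$. Because $c_i = c'_i$, the upward vertical cut rays match, so $G$ restricts to a diffeomorphism $B_+ \to B'_+$ with $G^* \check L^{F'} = \check L^F$ there; moreover across each such ray the one-sided smooth extensions of $A_+$ and of $A'_+$ jump by the same power of the shear $T$ (the power being fixed by the multiplicities, which agree by the monodromy of the $G_i$-compatible local lattices), so the jumps cancel and $G$ is already a diffeomorphism of $B \setminus B_\rmf$ onto $B' \setminus B'_\rmf$ intertwining the period lattices. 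It remains to extend $G$ across each node $\tilde{c}_i$. On $U_i \setminus \Set{\tilde{c}_i}$ both $G$ and $G_i$ pull $\check L^{F'}$ back to $\check L^F$, so $\phi_i \defeq G_i^{-1} \circ G$ is an affine automorphism of the punctured neighbourhood; since the centraliser in $\rmGL(2, \Z)$ of the node monodromy $T^{m_i}$ consists of $\pm T^k$ for $k \in \Z$ and $\phi_i$ is orientation preserving, in the piecewise affine coordinates $\phi_i$ is the shear $T^{k_i}$ around $\tilde{c}_i$ for some $k_i \in \Z$, which extends to a smooth self-diffeomorphism $\psi_i$ of $U_i$ fixing $\tilde{c}_i$. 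Hence $G = G_i \circ \psi_i$ on $U_i \setminus \Set{\tilde{c}_i}$, so $G$ extends smoothly across $\tilde{c}_i$ by the diffeomorphism $G_i \circ \psi_i$, and since $\psi_i^* \check L^F = \check L^F$ on $U_i$ and $G_i^* \check L^{F'} = \check L^F$ on $U_i$, the extension still satisfies $G^* \check L^{F'} = \check L^F$. Patching over the pairwise disjoint nodes yields the desired global diffeomorphism.

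The main obstacle is the final step of the converse: one must know that an affine automorphism of a once-punctured node neighbourhood is exactly a piecewise-affine shear $T^{k}$ around the node and --- crucially --- that such a shear is a genuine \emph{smooth} self-diffeomorphism across the node rather than merely a homeomorphism. This is precisely where the normal form of \cref{thm:affine-local-form} and the realisation of a twisting-index change by a fibre-preserving symplectomorphism (as in \cite{2019arXiv190903501v3P,PeTa2018}) do the real work. The forward implication, by contrast, is essentially bookkeeping with the construction of a representative once \cref{thm:affine-local-form} supplies the rigidity of $G$ on the strata and at the nodes.
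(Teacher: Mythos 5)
Your overall strategy coincides with the paper's: in both directions you conjugate by choices of piecewise affine coordinates $A_+$, $A'_+$, compare $G_+ = (A'_+)^{-1} \circ A_+$ with the local maps $G_i$, and invoke the $\Z \times \R$ action of \cref{thm:markedpoly-classification} to classify the discrepancy. The forward implication is essentially the paper's argument and is fine.

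The problem is exactly at the step you yourself single out as the crux of the converse. You assert that $\phi_i = G_i^{-1} \circ G$, which in piecewise affine coordinates is the shear $T^{k_i}$ about $\tilde{c}_i$, ``extends to a smooth self-diffeomorphism $\psi_i$ of $U_i$.'' This is false when $k_i \neq 0$. In an Eliasson chart centred at the node the second action coordinate is $A_2 = \frac{1}{2\pi}\bigl(\wt S^i - \sum_{j} \Im(E^i_j \ln E^i_j - E^i_j)\bigr)$, whose partial derivative in $c^2$ is $-\frac{1}{2\pi}\ln\Abs{c} + O(1)$; the map determined by $c^1 \circ \psi_i = c^1$ and $A_2 \circ \psi_i = A_2 + k_i c^1$ therefore displaces $c^2$ by roughly $-2\pi k_i c^1 / \ln\Abs{c}$, a function that is $C^1$ at the origin but whose second derivative blows up like $1/(c^1 \ln^2 \Abs{c^1})$ along the abscissa. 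So $\psi_i$ is only $C^1$, and $G = G_i \circ \psi_i$ would fail to be smooth at the nodes. The ``realisation of a twisting-index change'' you cite is a change of the preferred local momentum map, i.e.\ a relabelling of the action coordinates upstairs, not a diffeomorphism of the base, so it cannot supply $\psi_i$. The paper's own proof does not introduce $\psi_i$: it argues that $G_+$ and $G_i$ differ by an element of the $\Z \times \R$ action, deduces by continuity that $G_+^* \Res{\check L^{F'}}_{U'_i} = \Res{\check L^F}_{U_i}$, and then extends $G_+$ itself, so the burden there is to see that the discrepancy does not obstruct smoothness of that extension rather than to realise a nontrivial shear smoothly. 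As written, your patching step needs either a proof that all the $k_i$ can be made to vanish simultaneously by a single global adjustment of $A_+$ and $A'_+$, or a different extension mechanism; composing with $\psi_i$ does not work.
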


\begin{proof}
  Let $B_+ = B \setminus \bigcup_{i = 1}^\vf \ell_{\tilde{c}_i}^+$ and $B'_+ = B' \setminus \bigcup_{i = 1}^\vf \ell_{\tilde{c}'_i}^+$.
  By~\cite[Proposition 2.5]{2019arXiv190903501v3P} there exist choices of piecewise affine coordinates $A_+ \colon B \to \R^2$ and $A'_+ \colon B' \to \R^2$.
  Suppose such a $G$ exists and by the 3\textsuperscript{rd} of their properties the map
  \begin{equation*}
    \Res{A'_+}_{B'_+} \circ G \circ \Res{A_+}_{B_+}^{-1} \colon \De \setminus \bigcup_{i = 1}^\vf \ell_{c_i}^+ \to \De' \setminus \bigcup_{i = 1}^\vf \ell_{c'_i}^+
  \end{equation*}
  belongs to the action of $\Z \times \R$ described in \cref{thm:markedpoly-classification} and therefore $A'_+ \circ G \circ A_+^{-1}$ equals the identity for appropriate choice of $A_+$ and $A'_+$.
  Then we have $\De = A_+(B) = A'_+(B') = \De'$, $(c_i)_{i = 1}^\vf = (A_+(\tilde{c}_i))_{i = 1}^\vf = (A'_+(\tilde{c}'_i))_{i = 1}^\vf = (c'_i)_{i = 1}^\vf$, and since $G$ maps $\tilde{c}_i$ to $\tilde{c}'_i$ then $G_i^* \Res{\check L^{F'}}_{U'_i} = \Res{\check L^F}_{U_i}$ is automatic with $G_i = \Res{G}_{U_i}$.

  Suppose that $G_i \colon U_i \to U'_i$ are given with $G_i^* \Res{\check L^{F'}}_{U'_i} = \Res{\check L^F}_{U_i}$ and that $\De = \De'$, $(c_i)_{i = 1}^\vf = (c'_i)_{i = 1}^\vf$.
  Let $A_+ \colon B \to \R^2$ and $A'_+ \colon B' \to \R^2$ be the corresponding choices of piecewise affine coordinates.
  Since $\Res{A_+}_{B_+}$ and $\Res{A'_+}_{B'_+}$ preserve the period lattices then for $G_+ = (A'_+)^{-1} \circ A_+$ we have $G_+^* \Res{\check L^{F'}}_{B'_+} = \Res{\check L^F}_{B_+}$.
  In the simply connected set $U_i \cap B_+$, both $G_+$ and $G_i$ preserve the period lattices, and so they differ by the action of $\Z \times \R$ described in \cref{thm:markedpoly-classification}.
  By continuity $G_+^* \Res{\check L^{F'}}_{U'_i} = \Res{\check L^F}_{U_i}$.
  Now $G_+$ preserves the period lattices on $B_+ \cup \bigcup_{i = 1}^\vf U_i$ whose complement in $B$ has no nodes, and therefore $G_+$ can be extended to a diffeomorphism $G \colon B \to B'$ with $G^* \check L^{F'} = \check L^F$.
\end{proof}

For local normal forms of $\calA^F$, we obtain that
\begin{equation} \label{eq:action-local-form}
  \calA^F(U) = \Res{A_1}_U \Z \oplus \Res{A_2}_U \Z \oplus \R
\end{equation}
for any simply connected open subset $U \subseteq B^{(2)}$, and when $U \subseteq U_i \setminus \Set{\tilde{c}_i}$ for some $i \in \Seq{1, \ldots, \vf}$ then we can choose
\begin{equation} \begin{cases} \label{eq:action-local-form-basis}
  A_1 = c^1, \\
  A_2 = \frac{1}{2\pi} \Pa{\wt S^i - \sum_{j \in \Z_{m_i}} \Im(E^i_j \ln E^i_j - E^i_j)}.
\end{cases} \end{equation}
But when we consider a region across $\tilde{c}_i$ the $\ln$ function becomes multi-valued and we study their function spaces in the next section.

\subsection{Multi-valued functions and infinite jets}
\label{sec:infinite-jet}

Note that $\frakV_+$ consists of the diffeomorphisms lifted by semitoric isomorphisms.
But on the right-hand side, the function is not the usual meaning of function.
To clarify we would give an appropriate definition of multi-valued functions they live in.
For an $m$-pinched singular fiber we consider and classify ``functions'' of the form of $\sum_{j \in \Z_m} G_j \ln G_j$ for $G_j \in \frakV_+$; we make this precise.

\begin{definition} \label{def:multi-valued-function}
  Let $\Sigma = \C \setminus \Seq{0}$ denote the punctured complex plane, and let $\wt\Sigma$ be the universal cover of $\Sigma$ which is the Riemann surface for $\ln$ and $\pi \colon \wt\Sigma \to \Sigma$ be the covering map.
  Let $\calS$ be the space of germs of smooth functions $f \colon \wt\Sigma \to \C$ whose values on each fiber of $\pi$ form an arithmetic sequence and $f \sim g$ iff $f = g$ on $\pi^{-1}(U \cap \Sigma)$ for some neighborhood $U$ of $0$ in $\C$.
  An element of $\calS$ is called \emph{germs of multi-valued smooth functions}.
  The \emph{monodromy} of a germ $[f] \in \calS$ is a germ of functions $g \in \rmC^\infty(\Sigma; \C)$ defined by
  \begin{equation*}
    g(c) = f \circ \gamma(2 \pi) - f \circ \gamma(0)
  \end{equation*}
  for any lift $\wt \gamma \colon [0, 2\pi] \to \Sigma$ of a loop $\gamma$ in $\Sigma$ with $[\gamma] \in \pi_1(\Sigma, c)$ the counter-clockwise generator.
  A germ $[f] \in \calS$ is \emph{single-valued} iff its monodromy vanishes, or equivalently $f = \tilde{f} \circ \pi$ for some $\tilde{f} \in \rmC^\infty(\Sigma; \C)$.
  In this case, say $[f] \in \calS$ is \emph{smooth at the origin} if $\tilde{f}$ extends to a function in $\rmC^\infty(\C; \C)$.
\end{definition}

Therefore, $A_2$ in \cref{eq:action-local-form-basis} is better understood as an element in $\calS$ representing a multi-valued function $U_i \to \R$.

\begin{remark}
  For brevity, we do not distinguish $f$ with $\tilde{f}$ but instead speak about which spaces they live in and do not distinguish a germ with its representative.
\end{remark}

\begin{lemma} \label{lem:local-affine-equiv}
  Let $F$ be a semitoric system on $(M, \om)$ and $F'$ on $(M', \om')$ of dimension four and let $B = F(M)$ and $B' = F'(M')$.
  Then for $\tilde{c} \in B_\rmf$ and $\tilde{c}' \in B'_\rmf$, there is $U \in \calN(B, \tilde{c})$, $U' \in \calN(B', \tilde{c}')$, and a diffeomorphism $G \colon U \to U'$ sending $\tilde{c}$ to $\tilde{c}'$ such that $G^* \Res{\check L^{F'}}_{U'} = \Res{\check L^F}_U$ iff there are germs of smooth functions $G_j, G'_j \in \frakV_+$, $S'_0, S_0 \in \rmC^\infty(\C; \C)$ for $j \in \Z_m$ satisfying
  \begin{align*}
    \sfs_j &= \Tl_0[S_0 \circ G_j^{-1}] + (2\pi X)\Z, \quad \sfg_{j, \ell} = \Tl_0[\proj_2 \circ G_\ell \circ G_j^{-1}], \\
    \sfs'_j &= \Tl_0[S'_0 \circ G'_0 \circ (G'_j)^{-1}] + (2\pi X)\Z, \quad \sfg'_{j, \ell} = \Tl_0[\proj_2 \circ G'_\ell \circ (G'_j)^{-1}]
  \end{align*}
  for $j, \ell \in \Z_m$ such that
  \begin{align} \label{eq:affine-equiv}
    S'_0 \circ G'_0 - S_0 = \sum_{j \in \Z_m} \Im(G'_j \ln G'_j) - \sum_{j \in \Z_m} \Im(G_j \ln G_j) - \sum_{j \in \Z_m} \Im(G'_j - G_j) \in \calS.
  \end{align}
\end{lemma}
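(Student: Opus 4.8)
The plan is to reduce the statement to the semi-local classification at a focus-focus fiber (\cref{thm:invariant-bijection-stor}) and then to translate the condition $G^* \Res{\check L^{F'}}_{U'} = \Res{\check L^F}_U$ into a comparison of the action-sheaf normal forms given in \cref{eq:action-local-form-basis}. First I would fix the focus-focus fibers $F^{-1}(\tilde c)$ and $(F')^{-1}(\tilde c')$, both of some multiplicity $m$ (if the multiplicities differ, no such $G$ exists, since $G$ must identify the period lattices whose local models in \cref{thm:affine-local-form} carry $m$ as an invariant via the monodromy calculation). On each fiber I would pick the focus-focus points $(p_j)_{j\in\Z_m}$ and $(p'_j)_{j\in\Z_m}$ ordered by the flow of $X_H$ and choose Eliasson local charts $E_j, E'_j$ satisfying the first item of \cref{cor:eliasson-focus-stor}, i.e.\ orientation-preserving and acting on the abscissa by translation; these are precisely the $G_j, G'_j \in \frakV_+$ appearing in the statement after composing with the piecewise affine coordinates $A_+, A'_+$ (so that $G_j = E_j \circ A_+$ as germs at $\tilde c$, and similarly for the primed side). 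With the functions $\wt S, \wt S'$ from \cref{eq:def-S-tilde}, the content of \cref{eq:action-local-form-basis} is that near $\tilde c$ the action lattice $\Res{\check L^F}_U$ is generated by $\der c^1$ and $\frac{1}{2\pi}\der A_2$ with $A_2 = \frac{1}{2\pi}(\wt S - \sum_j \Im(E_j \ln E_j - E_j))$, and likewise for $F'$.

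The heart of the argument is then a two-way translation. For the forward direction, given $G$ with $G^*\Res{\check L^{F'}}_{U'} = \Res{\check L^F}_U$ and $G(\tilde c)=\tilde c'$: since $G$ preserves the abscissa-component of the lattice (the only lattice vector that extends continuously across $\tilde c$ and stays in $\ker$ of the relevant linear form is $\der c^1$), $G \in \frakV$, and by \cref{cor:eliasson-focus-stor} after possibly reversing orientation we may take $G \in \frakV_+$; replacing $E'_j$ by $E'_j \circ G$ does not change the semi-local orbit, so WLOG $G = \identity$, $U = U'$, and the two action lattices coincide. The second generators must then differ by a lattice automorphism fixing $\der c^1$, i.e.\ $\frac{1}{2\pi}\der A'_2 = \pm \frac{1}{2\pi}\der A_2 + k\,\der c^1$ for some $k \in \Z$; orientation-preservation forces the $+$ sign, and integrating (the $1$-forms being closed on the simply connected $U\setminus\{\tilde c\}$, with matching monodromy $-2\pi\der c^1$ from $\kappa$) gives $A'_2 = A_2 + k c^1 + \text{const}$ as multi-valued functions in $\calS$. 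Expanding $A_2, A'_2$ via their definitions and absorbing the $kc^1 + \text{const}$ term into $\wt S, \wt S'$ (equivalently into the choice of $S_0, S'_0$ — this is where the $(2\pi X)\Z$ ambiguity and the freedom in \cref{thm:markedpoly-classification} get used), the identity $A'_2 = A_2$ becomes exactly \cref{eq:affine-equiv} after setting $S_0 = \Tl_0[\wt S \circ (E_0)^{-1}]$-type normalizations; the relations $\sfs_j = \Tl_0[S_0\circ G_j^{-1}]+(2\pi X)\Z$ and $\sfg_{j,\ell} = \Tl_0[\proj_2\circ G_\ell\circ G_j^{-1}]$ are then just the definitions of the focus-focus label recalled before \cref{thm:invariant-bijection-stor}, transported through $A_+$.

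For the converse, suppose the $G_j, G'_j, S_0, S'_0$ are given with \cref{eq:affine-equiv}. I would run the construction in reverse: \cref{eq:affine-equiv} says precisely that the multi-valued functions $A_2$ (built from $S_0$ and the $G_j$) and $A'_2 \circ G'_0$ (built from $S'_0$ and the $G'_j$) agree modulo the monodromy-$\kappa$ terms, hence the corresponding period lattices $\der c^1 \Z \oplus \frac{1}{2\pi}\der A_2 \,\Z$ and the pullback under $G'_0$ of $\der c^1 \Z \oplus \frac{1}{2\pi}\der A'_2\,\Z$ coincide on $U \setminus \{\tilde c\}$, and extend to coincide on all of $U$ by the local normal form in \cref{thm:affine-local-form}. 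Then $G := (A'_+)^{-1}\circ G'_0 \circ A_+$ (suitably interpreted as a germ of diffeomorphism $U \to U'$ sending $\tilde c$ to $\tilde c'$) satisfies $G^*\Res{\check L^{F'}}_{U'} = \Res{\check L^F}_U$. The main obstacle I expect is bookkeeping: keeping straight which objects are germs at $\tilde c$ versus at $0$, how $A_+$ intertwines $\frakV_+$ acting on $B$ with $\frakV_+$ acting on $\R^2$, and how the equivalence $\sim$ on $\calS$ interacts with taking infinite jets $\Tl_0$ — in particular verifying that passing from the $\calS$-level identity \cref{eq:affine-equiv} to the formal-power-series relations for $\sfs_j, \sfg_{j,\ell}$ loses no information that the period lattice does not already record. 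This is exactly the point where the excerpt's careful setup of multi-valued functions (\cref{def:multi-valued-function}) and of the labels (\cref{def:ff-label}, \cref{def:complete-ff-label}) pays off, so the proof should amount to citing \cref{thm:invariant-bijection-stor}, \cref{thm:affine-local-form}, and \cref{cor:eliasson-focus-stor}, and then unwinding the definitions along the lines above.
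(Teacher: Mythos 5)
Your proposal follows essentially the same route as the paper: reduce $G^* \Res{\check L^{F'}}_{U'} = \Res{\check L^F}_U$ to equality of action sheaves via \cref{thm:lattice-bundle-period-action-stor}, compare the second generators $A_2$ and $G^* A'_2$ using the normal form \cref{eq:action-local-form-basis}, and absorb the residual $k c^1 + \mathrm{const}$ into the $(2\pi X)\Z$ ambiguity to arrive at \cref{eq:affine-equiv}. The one slip is your identification $G_j = E_j \circ A_+$: since $A_+$ is only piecewise smooth (not smooth across the cut $\ell_{\tilde c}^+$), this is not an element of $\frakV_+$; the paper instead takes the transition germs $G_j = E_j \circ E_0^{-1}$ and $G'_j = E'_j \circ G \circ E_0^{-1}$ between Eliasson charts, which is what makes $G_j, G'_j \in \frakV_+$ and the relations $\sfg_{j, \ell} = \Tl_0[\proj_2 \circ G_\ell \circ G_j^{-1}]$ come out as stated.
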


\begin{proof}
  By \cref{thm:lattice-bundle-period-action-stor} we have $G^* \Res{\check L^{F'}}_{U'} = \Res{\check L^F}_U$ iff $G^* \Res{\calA^{F'}}_{U'} = \Res{\calA^F}_U$.
  Let $(A_1, A_2)$ generate $\Res{\calA^F}_U$ and $(A'_1, A'_2)$ generate $\Res{\calA^{F'}}_{U'}$ in the sense of \cref{eq:action-local-form} with $A_1 = A'_1 = c^1$, and then $\der G^* A'_2 \in \der A_2 + \der c^1\Z$.
  By \cref{eq:action-local-form-basis}, with $G_j = E_j \circ E_0^{-1}$ and $G'_j = E'_j \circ G \circ E_0^{-1}$, we have
  \begin{align*}
    2\pi (E_0^{-1})^*A_2 &= -\sum_{j \in \Z_m} \Im(G_j \ln G_j - G_j) + \wt S_0,\\
    2\pi (E_0^{-1})^*G^* A'_2 &= -\sum_{j \in \Z_m} \Im(G'_j \ln G'_j - G'_j) + \wt S'_0 \circ G'_0
  \end{align*}
  where
  \begin{align*}
    \tsfs_j = \Tl_0[\wt S_0 \circ G_j^{-1}], \quad \tsfs'_j = \Tl_0[\wt S'_0 \circ G'_0 \circ (G'_j)^{-1}]
  \end{align*}
  for $j \in \Z_m$.
  The equality \cref{eq:affine-equiv} holds.
\end{proof}

\begin{definition} \label{def:affine-equiv-complete}
  Two tuples $(G_0, \dotsc, G_{m-1}, \wt S_0)$ and $(G'_0, \dotsc, G'_{m-1}, \wt S'_0)$ in $\frakV_+^m \times \rmC^\infty(\C; \C)$ are called \emph{affine equivalent} iff
  \begin{align*}
    -\sum_{j \in \Z_{m_i}} \Im(G_j \ln G_j - G_j) + \wt S_0 &= -\sum_{j \in \Z_{m_i}} \Im(G'_j \ln G'_j - G'_j) + \wt S'_0 \circ G'_0 \in \calS
  \end{align*}
  Two tuples $(G_0, \dotsc, G_{m-1})$ and $(G'_0, \dotsc, G'_{m-1})$ in $\frakV_+^m$ are called \emph{affine admissible} iff
  \begin{align} \label{eq:affine-adm-complete}
    \sum_{j \in \Z_{m_i}} \Im(G'_j \ln G'_j - G'_j) - \sum_{j \in \Z_{m_i}} \Im(G_j \ln G_j - G_j) \in \rmC^\infty(\C; \C).
  \end{align}
\end{definition}

\begin{lemma} \label{lem:affine-equiv-adm}
  Two tuples $(G_0, \dotsc, G_{m-1})$ and $(G'_0, \dotsc, G'_{m-1})$ in $\frakV_+^m$ are affine admissible iff for any $\wt S_0 \in \rmC^\infty(\C; \C)$ there is a unique $\wt S'_0 \in \rmC^\infty(\C; \C)$ making $(G_0, \dotsc, G_{m-1}, \wt S_0)$ and $(G'_0, \dotsc, G'_{m-1}, \wt S'_0)$ affine equivalent.
\end{lemma}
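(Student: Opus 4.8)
The plan is to unwind \cref{def:affine-equiv-complete}. Rearranging the displayed identity there, the tuples $(G_0, \dotsc, G_{m-1}, \wt S_0)$ and $(G'_0, \dotsc, G'_{m-1}, \wt S'_0)$ are affine equivalent exactly when
\[
  \wt S'_0 \circ G'_0 = \wt S_0 + Q \quad\text{in } \calS, \qquad Q \defeq \sum_{j \in \Z_m} \Im(G'_j \ln G'_j - G'_j) - \sum_{j \in \Z_m} \Im(G_j \ln G_j - G_j),
\]
and I would first record that $Q$ is a legitimate element of $\calS$: since each $G_j \in \frakV_+$ preserves the first coordinate and is strictly increasing in the second, it vanishes only at the origin, so $\ln G_j$ is well defined on the pull-back of $\wt\Sigma$, and going once around the puncture changes $\Im(G_j \ln G_j - G_j)$ by the single-valued function $2\pi \Re(G_j) = 2\pi c^1$. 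On the other hand, affine admissibility of $(G_0, \dotsc, G_{m-1})$ and $(G'_0, \dotsc, G'_{m-1})$ in the sense of \cref{eq:affine-adm-complete} is exactly the assertion $Q \in \rmC^\infty(\C; \C)$, i.e.\ that $Q$ is single-valued and extends smoothly across the origin.

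The one structural ingredient I need is that, since $G'_0 \in \frakV_+$ is a germ of diffeomorphism of $\R^2 \cong \C$ fixing the origin, precomposition $\wt S'_0 \mapsto \wt S'_0 \circ G'_0$ is a bijection of $\rmC^\infty(\C; \C)$ onto itself with inverse $\,\cdot \circ (G'_0)^{-1}$; in particular $\wt S'_0 \circ G'_0 \in \rmC^\infty(\C; \C) \subseteq \calS$ is single-valued for every $\wt S'_0 \in \rmC^\infty(\C; \C)$.

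With this in hand the two implications are immediate. For the ``if'' direction I would assume affine admissibility, so $Q \in \rmC^\infty(\C; \C)$; then for any $\wt S_0 \in \rmC^\infty(\C; \C)$ the right-hand side $\wt S_0 + Q$ of the displayed identity lies in $\rmC^\infty(\C; \C)$, so $\wt S'_0 \defeq (\wt S_0 + Q) \circ (G'_0)^{-1} \in \rmC^\infty(\C; \C)$ solves it and, by the bijectivity above, is the unique solution, making $(G_0, \dotsc, G_{m-1}, \wt S_0)$ and $(G'_0, \dotsc, G'_{m-1}, \wt S'_0)$ affine equivalent. For the ``only if'' direction I would apply the hypothesis to the single choice $\wt S_0 = 0$: the resulting $\wt S'_0 \in \rmC^\infty(\C; \C)$ satisfies $\wt S'_0 \circ G'_0 = Q$ in $\calS$, and since the left-hand side is single-valued and smooth at the origin, so is $Q$, i.e.\ the tuples are affine admissible.

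The step requiring the most care --- and essentially the only content of the proof --- is the bookkeeping of ambient spaces: reading the defining identity of affine equivalence in $\calS$ so that the multi-valued terms $\Im(G_j \ln G_j)$ make sense, identifying $\rmC^\infty(\C; \C)$ with the subspace of $\calS$ of germs that are single-valued and extend smoothly across the origin, and checking that composition with a germ of diffeomorphism fixing $0$ preserves that subspace and acts bijectively on it. Beyond the chain rule there is no analytic input, so I do not anticipate a genuine obstacle.
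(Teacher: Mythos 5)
Your proposal is correct and follows essentially the same route as the paper, whose entire proof is the one-line observation that the quantity in \cref{eq:affine-adm-complete} must equal $\wt S'_0 \circ G'_0 - \wt S_0$; you simply make explicit the rearrangement, the bijectivity of precomposition by $G'_0$ on $\rmC^\infty(\C;\C)$, and the two directions. No gap.
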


\begin{proof}
  If \cref{eq:affine-adm-complete} is some smooth function then it equals $\wt S'_0 \circ G'_0 - \wt S_0$.
\end{proof}

\begin{definition} \label{def:affine-equiv-jet}
  Two tuples $(\sfG_0, \dotsc, \sfG_{m-1}, \wt \sfS_0)$ and $(\sfG'_0, \dotsc, \sfG'_{m-1}, \wt \sfS'_0)$ in $\sfV_+^m \times \tsfR$ are called \emph{affine equivalent} iff there are $G_j, G'_j \in \frakV_+$, $\wt S'_0, \wt S_0 \in \rmC^\infty(\C; \C)$ with
  \begin{align*} \label{def:diff-to-jet}
    \wt \sfS_0 = \Tl_0[\wt S_0], \quad \sfG_j = \Tl_0[G_j], \quad \wt \sfS'_0 = \Tl_0[\wt S'_0], \quad \sfG'_j = \Tl_0[G'_j]
  \end{align*}
  for $j \in \Z_m$, such that $(G_0, \dotsc, G_{m-1}, \wt S_0)$ and $(G'_0, \dotsc, G'_{m-1}, \wt S'_0)$ are affine equivalent; then $(\sfG_0, \dotsc, \sfG_{m-1})$ and $(\sfG'_0, \dotsc, \sfG'_{m-1})$ in $\frakV_+^m$ are called \emph{affine admissible} iff $(G_0, \dotsc, G_{m-1})$ and $(G'_0, \dotsc, G'_{m-1})$ are affine admissible.
\end{definition}

\begin{lemma} \label{lem:affine-equiv-jet}
  If $(\sfG_0, \dotsc, \sfG_{m-1}, \wt \sfS_0)$ and $(\sfG'_0, \dotsc, \sfG'_{m-1}, \wt \sfS'_0)$ in $\sfV_+^m \times \tsfR$ are affine equivalent, then for any $G_j, G'_j \in \frakV_+$, $\wt S_0 \in \rmC^\infty(\C; \C)$ with $\wt \sfS_0 = \Tl_0[S_0]$, $\sfG_j = \Tl_0[G_j]$, and $\sfG'_j = \Tl_0[G'_j]$ for $j \in \Z_m$, there is a unique $\wt S'_0 \in \rmC^\infty(\C; \C)$ with $\wt \sfS'_0 = \Tl_0[S'_0]$ making $(G_0, \dotsc, G_{m-1}, \wt S_0)$ and $(G'_0, \dotsc, G'_{m-1}, \wt S'_0)$ affine equivalent.
\end{lemma}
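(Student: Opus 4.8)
The plan is to reduce the statement about infinite jets to the statement about honest germs of smooth functions already recorded in \cref{lem:affine-equiv-adm}, exploiting the surjectivity of the Taylor map $\Tl_0 \colon \rmC^\infty(\C; \C) \to \tsfR$ (Borel's theorem, which also underlies \cref{def:inf-jet-positive}) together with the fact that affine equivalence at the level of honest functions fixes $\wt S'_0$ uniquely once $\wt S_0$ is chosen. Concretely, suppose $(\sfG_0, \dotsc, \sfG_{m-1}, \wt\sfS_0)$ and $(\sfG'_0, \dotsc, \sfG'_{m-1}, \wt\sfS'_0)$ are affine equivalent in the sense of \cref{def:affine-equiv-jet}: there exist \emph{some} lifts $H_j, H'_j \in \frakV_+$ and $\wt T_0, \wt T'_0 \in \rmC^\infty(\C; \C)$ with the prescribed jets such that $(H_0, \dotsc, H_{m-1}, \wt T_0)$ and $(H'_0, \dotsc, H'_{m-1}, \wt T'_0)$ are affine equivalent. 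In particular, taking infinite jets of \cref{eq:affine-adm-complete} shows that the tuples $(\sfG_0, \dotsc, \sfG_{m-1})$ and $(\sfG'_0, \dotsc, \sfG'_{m-1})$ are affine admissible, and this property depends only on the jets and not on the chosen lifts, since the difference between two lifts of the same jet is flat at the origin and \cref{eq:affine-adm-complete} for one pair of lifts differs from that for another by a flat function, which lies in $\rmC^\infty(\C; \C)$.

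Now take arbitrary lifts $G_j, G'_j \in \frakV_+$ and $\wt S_0 \in \rmC^\infty(\C; \C)$ with $\sfG_j = \Tl_0[G_j]$, $\sfG'_j = \Tl_0[G'_j]$, and $\wt\sfS_0 = \Tl_0[\wt S_0]$, as in the statement. By the admissibility just established and \cref{lem:affine-equiv-adm}, there is a unique $\wt S'_0 \in \rmC^\infty(\C; \C)$ making $(G_0, \dotsc, G_{m-1}, \wt S_0)$ and $(G'_0, \dotsc, G'_{m-1}, \wt S'_0)$ affine equivalent. It remains only to verify that $\Tl_0[\wt S'_0] = \wt\sfS'_0$. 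For this, compare the defining identity
\begin{equation*}
  \wt S'_0 \circ G'_0 - \wt S_0 = \sum_{j \in \Z_m} \Im(G'_j \ln G'_j - G'_j) - \sum_{j \in \Z_m} \Im(G_j \ln G_j - G_j)
\end{equation*}
with the analogous identity for the lifts $H_j, H'_j, \wt T_0, \wt T'_0$ witnessing the jet-level affine equivalence. Taking $\Tl_0$ of both identities and using that $\Tl_0$ is a ring homomorphism respecting composition, the right-hand sides have the same infinite jet (they differ by the jet of \cref{eq:affine-adm-complete} for two lifts, which is the same since both reduce to the admissibility relation among the fixed jets $\sfG_j, \sfG'_j$). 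Since $\Tl_0[\wt S_0] = \Tl_0[\wt T_0] = \wt\sfS_0$ and $\Tl_0[G'_0] = \Tl_0[H'_0] = \sfG'_0$, we get $\Tl_0[\wt S'_0] \circ \sfG'_0 = \Tl_0[\wt T'_0] \circ \sfG'_0 = \wt\sfS'_0 \circ \sfG'_0$, and since $\sfG'_0 \in \sfV_+$ is an invertible formal power series (it has nonzero linear part, being the jet of a local diffeomorphism), we may cancel it to conclude $\Tl_0[\wt S'_0] = \wt\sfS'_0$. Uniqueness of $\wt S'_0$ is inherited directly from \cref{lem:affine-equiv-adm}.

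The main obstacle I anticipate is the bookkeeping around the multi-valued term $\sum_j \Im(G_j \ln G_j)$ and its membership in $\calS$ versus $\rmC^\infty(\C;\C)$: one must be careful that $\Tl_0$ is being applied to genuinely single-valued smooth functions at each stage, i.e., that the admissibility hypothesis \cref{eq:affine-adm-complete} is exactly what guarantees the combination $\sum_j \Im(G'_j \ln G'_j - G'_j) - \sum_j \Im(G_j \ln G_j - G_j)$ descends to an element of $\rmC^\infty(\C;\C)$ on which $\Tl_0$ makes sense, and that the monodromies of the $G_j \ln G_j$ and $G'_j \ln G'_j$ terms cancel (this cancellation is forced by $\sfg_{j,\ell}, \sfg'_{j,\ell} \in \sfR_+$ having the same structure, cf.\ the relations in \cref{def:complete-ff-label}). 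Once the monodromy cancellation is in place, everything else is a formal consequence of Borel's theorem and \cref{lem:affine-equiv-adm}, with no further analytic input needed.
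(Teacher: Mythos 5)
Your proposal is correct and follows essentially the same route as the paper: the whole content of the lemma is that changing the lifts $G_j$, $G'_j$, $\wt S_0$ by functions flat at the origin changes both sides of the affine-equivalence identity by flat functions, which is exactly what the paper's one-line proof invokes via \cite[Lemma 2.18]{PeTa2018}, and your subsequent bookkeeping (well-definedness of admissibility, applying \cref{lem:affine-equiv-adm}, then matching the jet of the resulting $\wt S'_0$) just makes that explicit. The one step you assert rather than justify is that $\Im(G_j \ln G_j - G_j)$ changes by a flat \emph{smooth} function when $G_j$ changes by a flat function — the $\ln$ singularity makes this non-obvious and it is precisely the content of the cited lemma, so a citation or a short argument there would close the only gap.
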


\begin{proof}
  By \cite[Lemma 2.18]{PeTa2018}, both sides of \cref{eq:affine-adm-complete} change by adding $\inftes$ when $G_j$, $G'_j$ for $j \in \Z_m$, $\wt S_0$, and $\wt S'_0$ change by adding $\inftes$.
\end{proof}

\begin{definition} \label{def:affine-equiv-label}
  Two complete focus-focus labels $[\tsfs_j, \sfg_{j, \ell}]_{j, \ell \in \Z_m}$ and $[\tsfs'_j, \sfg'_{j, \ell}]_{j, \ell \in \Z_{m'}}$ are called \emph{affine equivalent} via $\sfG \in \sfV_+$ iff $m = m'$ and $(\sfG_0, \dotsc, \sfG_{m-1}, \wt \sfS_0)$ and $(\sfG'_0, \dotsc, \sfG'_{m-1}, \wt \sfS'_0)$ are affine equivalent where
  \begin{align*}
    \sfG_j(X, Y) = \Pa{X, \sfg_{0, j}(X, Y)}, \qquad \sfG'_j(X, Y) = \Pa{X, \sfg'_{0, j}(\sfG(X, Y))}.
  \end{align*}
\end{definition}

\section{Main results}
\label{sec:main-result}

\begin{theorem} \label{thm:global-affine-equiv}
  Let $F$ be a semitoric system on $(M, \om)$ and $F'$ on $(M', \om')$ of dimension four and let $B = F(M)$ and $B' = F'(M')$.
  Then a diffeomorphism $G \colon B \to B'$ satisfies $G^* \check L^{F'} = \check L^F$ iff there are complete semitoric ingredient representatives
  \begin{equation*}
    \Pa{\De, (c_i)_{i = 1}^\vf, \Pa{[\tsfs^i_j, \sfg^i_{j, \ell}]_{j, \ell \in \Z_{m_i}}}_{i = 1}^\vf}, \Pa{\De', (c'_i)_{i = 1}^\vf, \Pa{[\tsfs^{\prime i}_j, \sfg^{\prime i}_{j, \ell}]_{j, \ell \in \Z_{m'_i}}}_{i = 1}^\vf}
  \end{equation*}
  where $\De = \De'$, $(c_i)_{i = 1}^\vf = (c'_i)_{i = 1}^\vf$, and $[\tsfs^i_j, \sfg^i_{j, \ell}]_{j, \ell \in \Z_{m_i}}$ is affine equiavlent to\linebreak $[\tsfs^{\prime i}_j, \sfg^{\prime i}_{j, \ell}]_{j, \ell \in \Z_{m'_i}}$ via $\sfG = \Tl_0[G]$.
\end{theorem}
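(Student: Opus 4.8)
The plan is to reduce the global statement \cref{thm:global-affine-equiv} to the semi-local one already established in \cref{lem:local-affine-equiv} together with the global classification of \cref{thm:markedpoly-classification}, by combining them with the structure of the piecewise affine coordinates $A_+$ from \cite{2019arXiv190903501v3P}. The key point is that the condition $G^* \check L^{F'} = \check L^F$ splits into a ``regular part'' (controlling $\De$ and the points $c_i$) and a collection of ``node-local parts'' (controlling each complete focus-focus label), and these two layers of data are exactly what a complete semitoric ingredient representative packages.

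\textbf{Step 1: The forward direction.} Suppose $G \colon B \to B'$ satisfies $G^* \check L^{F'} = \check L^F$. By \cref{thm:lattice-bundle-period-action-stor} this is equivalent to $G^* \calA^{F'} = \calA^F$. First I would pull back through $A_+$ and $A'_+$: using property (3) of the piecewise affine coordinates, the map $\Res{A'_+}_{B'_+} \circ G \circ \Res{A_+}_{B_+}^{-1}$ on the cut-open polygons preserves the standard lattice, hence (as in the proof of the corollary preceding \cref{eq:action-local-form}) it belongs to the $\Z \times \R$ action of \cref{thm:markedpoly-classification}. Adjusting the choices of $A_+, A'_+$ by that action, we may assume this map is the identity, so $\De = \De'$ and $(c_i)_{i=1}^\vf = (c'_i)_{i=1}^\vf$, and $G$ sends $\tilde c_i$ to $\tilde c'_i$. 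Then for each $i$, restricting $G$ to a saturated neighborhood $U_i$ of $\tilde c_i$ and applying \cref{lem:local-affine-equiv} with the Eliasson charts $E^i_j$ and $E^{\prime i}_j$ from the construction of the labels: the diffeomorphism $\sfG^i := \Tl_0[E^{\prime i}_0 \circ G \circ (E^i_0)^{-1}] \in \sfV_+$ witnesses, via \cref{eq:affine-equiv} and \cref{def:affine-equiv-jet,def:affine-equiv-label}, that $[\tsfs^i_j, \sfg^i_{j,\ell}]$ is affine equivalent to $[\tsfs^{\prime i}_j, \sfg^{\prime i}_{j,\ell}]$. The only subtlety is checking that all the $\sfG^i$ arise as $\Tl_0[G]$ for a single $G$; here one uses that on each $U_i$ the germ of $G$ at $\tilde c_i$ has a well-defined infinite jet in an Eliasson chart, and that \cref{cor:eliasson-focus-stor} pins the abscissa-preserving, orientation-preserving normalization so that the jet transforms correctly.

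\textbf{Step 2: The backward direction.} Conversely, given the two complete semitoric ingredient representatives with $\De = \De'$, $(c_i) = (c'_i)$, and each pair of labels affine equivalent via $\sfG = \Tl_0[G]$, I would build $G$ by gluing. Set $G_+ := (A'_+)^{-1} \circ A_+ \colon B_+ \to B'_+$; by property (3) again $G_+^* \Res{\check L^{F'}}_{B'_+} = \Res{\check L^F}_{B_+}$. Near each node, \cref{lem:local-affine-equiv} (its ``if'' direction, fed by the affine-equivalence data and \cref{lem:affine-equiv-jet} to realize the formal equivalence by genuine smooth functions) produces a local diffeomorphism $G_i \colon U_i \to U'_i$ with $G_i^* \Res{\check L^{F'}}_{U'_i} = \Res{\check L^F}_{U_i}$. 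On the overlap $U_i \cap B_+$, which is simply connected, both $G_+$ and $G_i$ preserve the period lattice, so they differ by the $\Z \times \R$ action of \cref{thm:markedpoly-classification}; since that action is by global affine maps that already fix $\De$ and the points, it is trivial, and $G_+ = G_i$ on the overlap by continuity. Hence the $G_i$ extend $G_+$ to a diffeomorphism on $B_+ \cup \bigcup_i U_i$, whose complement in $B$ contains no nodes; a final extension across the remaining regular region (where the lattice is locally standard) yields $G \colon B \to B'$ with $G^* \check L^{F'} = \check L^F$. This last extension is exactly the argument already used in the proof of the corollary before \cref{eq:action-local-form}.

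\textbf{Main obstacle.} I expect the genuinely delicate step to be the compatibility in Step 1: showing that the individual formal diffeomorphisms $\sfG^i$ at the various nodes are all realized as jets of the one global $G$, and conversely in Step 2 that the single $\sfG$ prescribed in the hypothesis can be realized by a $G$ whose restriction near each node induces the required affine equivalence simultaneously with matching $A_+$ on $B_+$. The crux is that ``affine equivalent via $\sfG$'' in \cref{def:affine-equiv-label} only records the infinite jet of $G$ at the node, whereas the global object $G$ must also interpolate consistently with the rigid affine data $(\De, (c_i))$ on the regular part. The normalization in \cref{cor:eliasson-focus-stor} (orientation preserved, abscissa translated) is what makes these jets transform predictably and lets Borel-type extension (already packaged in \cite{PeTa2018}, cf.\ \cref{lem:affine-equiv-jet}) glue the formal and smooth pictures; verifying that no monodromy obstruction around the branch cuts $\ell_{\tilde c_i}^+$ survives is where the bookkeeping is heaviest.
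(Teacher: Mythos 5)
Your proposal is correct and follows essentially the same route as the paper: the paper's own (very terse) proof likewise reduces to the corollary preceding \cref{eq:action-local-form} for the global-to-local splitting via the piecewise affine coordinates, then invokes \cref{lem:local-affine-equiv,lem:affine-equiv-adm,lem:affine-equiv-jet} at each node, with the only flagged subtlety being that global lattice preservation forces $G^* A'_2 = A_2$ rather than $\der G^* A'_2 \in \der A_2 + \der c^1 \Z$ --- the same normalization issue you resolve by fixing the $\Z \times \R$ ambiguity. Your write-up is a faithful (and more detailed) expansion of the paper's argument.
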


\begin{proof}
  This is almost a result of \cref{lem:local-affine-equiv,lem:affine-equiv-adm,lem:affine-equiv-jet}, except that since $G$ preserves the period lattice globally we should use $G^* A'_2 = A_2$ instead of $\der G^* A'_2 \in \der A_2 + \der c^1\Z$.
\end{proof}

\subsection{Fibers with one pinched point}
\label{ssec:main-theorem-single-pinch}

Throughout this section, we deal with focus-focus fibers with multiplicity $1$.

\begin{definition}
  A semitoric system $F = (J, H)$ is \emph{simple} if each fiber of $J$ (and hence also each fiber of $F$) contains at most one focus-focus point.
\end{definition}

\begin{lemma} \label{lem:affine-equiv-jet-single}
  Two tuples $(\sfG_0, \wt \sfS_0)$ and $(\sfG'_0, \wt \sfS'_0)$ in $\sfV_+ \times \tsfR$ are affine equivalent iff there are are equal.
\end{lemma}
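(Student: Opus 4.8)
The plan is to show that in the multiplicity-$1$ case the notion of affine equivalence trivializes. The heart of the matter is the defining relation \cref{eq:affine-equiv} specialized to $m = 1$: two tuples $(\sfG_0, \wt\sfS_0)$ and $(\sfG'_0, \wt\sfS'_0)$ in $\sfV_+ \times \tsfR$ are affine equivalent iff there are representatives $G_0, G'_0 \in \frakV_+$ and $\wt S_0, \wt S'_0 \in \rmC^\infty(\C; \C)$ with the right infinite jets, such that
\begin{equation*}
  \wt S'_0 \circ G'_0 - \wt S_0 = \Im(G'_0 \ln G'_0) - \Im(G_0 \ln G_0) - \Im(G'_0 - G_0) \in \calS.
\end{equation*}
First I would observe that the right-hand side, as an element of $\calS$, must be single-valued and smooth at the origin in order for the equation to hold with a genuine smooth $\wt S'_0 \circ G'_0$ on the left; so the content splits into (a) the single-valuedness constraint, which forces a relation between $\sfG_0$ and $\sfG'_0$, and (b) once that constraint is met, the equation simply \emph{defines} $\wt S'_0$ in terms of $\wt S_0$, $\sfG_0$, $\sfG'_0$, by \cref{lem:affine-equiv-adm,lem:affine-equiv-jet}.

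For step (a), the monodromy of $c \mapsto \Im(c \ln c)$ around $0$ is (up to sign and the $2\pi$ factor) $-\operatorname{Re} c$, matching the monodromy $-2\pi \der c^1$ of $\kappa$; pulling back by $G_0 \in \frakV_+$, which fixes $0$ and preserves the abscissa $c^1$, the monodromy of $\Im(G_0 \ln G_0)$ is $-\operatorname{Re}(G_0) = -c^1$, independently of which $G_0 \in \frakV_+$ we choose. Hence the monodromy of the right-hand side above is $-c^1 - (-c^1) = 0$ automatically, for \emph{any} $G_0, G'_0 \in \frakV_+$ — so in multiplicity $1$ the single-valuedness constraint is vacuous and imposes no relation at all between $\sfG_0$ and $\sfG'_0$. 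This is exactly where the single-pinch case is special: with $m \geq 2$ the individual monodromies need not cancel term-by-term and one gets genuine obstructions.

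It then remains to see that, despite the constraint being vacuous, affine equivalence still collapses to equality. The key point is that the right-hand side, while single-valued, need \emph{not} be smooth at the origin for arbitrary $G_0 \ne G'_0$: the function $\Im(c\ln c)$ has a mild (Lipschitz but not $C^1$) singularity at $0$, and I would show that $\Im(G'_0\ln G'_0) - \Im(G_0\ln G_0) - \Im(G'_0 - G_0)$ extends to a $\rmC^\infty$ function near $0$ only when $\sfG_0 = \sfG'_0$ as infinite jets, by comparing the non-smooth parts of the two $\Im(G\ln G)$ terms along the branch locus — this is the computation underlying the first-order and higher-order obstruction analysis, and it is essentially \cite[Lemma 2.18]{PeTa2018} (or its refinement) which already records how $\Im(G\ln G)$ depends on the jet of $G$ modulo smooth functions. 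Concretely, writing $G'_0 = G_0 + h$ with $h$ having zero jet would make the difference smooth, while a nonzero jet of $h$ produces a nonremovable $\ln$-type singularity; thus $\sfG_0 = \sfG'_0$. Once $\sfG_0 = \sfG'_0$, the right-hand side is identically a smooth function $\psi$ with $\wt\sfS'_0 = \Tl_0[\wt S'_0]$ forced by $\wt S'_0 \circ G'_0 = \wt S_0 + \psi$, and since affine equivalence was assumed (so such $\wt S'_0$ exists) and $\wt S'_0 \circ G'_0$ determines $\wt S'_0$ uniquely (as $G'_0$ is a germ of diffeomorphism), taking jets gives $\wt\sfS'_0 = \wt\sfS_0$; conversely equality is trivially affine equivalence via $\sfG = \identity$.

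The main obstacle I anticipate is step (a)'s smoothness analysis: proving that $\Im(G'_0\ln G'_0) - \Im(G_0\ln G_0)$ fails to be $C^\infty$ at $0$ unless the jets agree. This is not a formal manipulation — it requires unwinding the asymptotics of $\Im(c\ln c)$ under a diffeomorphic change of the argument and isolating the obstruction, which is the genuine analytic input. Everything downstream (the passage between $\frakV_+$-representatives and their jets, and the bookkeeping of $\wt S'_0$) is handled by \cref{lem:affine-equiv-adm,lem:affine-equiv-jet} and is routine.
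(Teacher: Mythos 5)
Your reduction of the statement to the single equation
\begin{equation*}
  \wt S'_0 \circ G'_0 - \wt S_0 = \Im(G'_0 \ln G'_0) - \Im(G_0 \ln G_0) - \Im(G'_0 - G_0) \in \calS
\end{equation*}
is the right starting point, and your step (b) bookkeeping of $\wt S'_0$ via \cref{lem:affine-equiv-adm,lem:affine-equiv-jet} is fine. But the crux of the lemma --- that smoothness of the right-hand side at the origin forces $\Tl_0[G_0] = \Tl_0[G'_0]$ --- is exactly the step you do not prove. You attribute it to ``essentially \cite[Lemma 2.18]{PeTa2018} (or its refinement)'', but that lemma only gives the \emph{easy} direction (a flat perturbation of $G$ perturbs $\Im(G\ln G)$ by a flat function, hence jets equal $\Rightarrow$ difference smooth); it says nothing about the converse. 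Your heuristic that ``a nonzero jet of $h$ produces a nonremovable $\ln$-type singularity'' is not a formal consequence of the expansion: by \cref{lem:local-affine-lnG} the non-smooth part of $\Im(G\ln G)$ involves both a $\ln Z_\mu$ term and negative powers $Z_\mu^{-k}$ adapted to the first-order invariant $\mu$ of $G$, so one must first match the invariants $\mu, \mu'$, then kill the negative-power coefficients, then compare the $\ln$ coefficients --- and indeed the whole point of \cref{prop:local-affine-equiv-mult-liftable} and \cref{ssec:counterexamples} is that for $m \geq 2$ the analogous cancellations \emph{can} occur with unequal tuples. You flag this step yourself as ``the main obstacle'' and ``the genuine analytic input'', which is an accurate self-assessment: as written the proposal assumes the conclusion at its key point. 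The paper closes this gap by a different, geometric argument: after reducing to $\sfG_0 = \identity$, $\wt\sfS_0 = 0$, it differentiates the identity to get that $(G'_0)^*\kappa - \kappa$ is an exact smooth form, concludes that the fiber translation $\Psi_{(G'_0)^*\kappa} = \Psi_\kappa \circ \Psi_{(G'_0)^*\der \wt S'_0}$ is smooth by \cite[Lemma~2.17]{PeTa2018}, and then invokes \cite[Lemma~2.19]{PeTa2018}, which says precisely that smoothness of $\Psi_{(G'_0)^*\kappa}$ forces $\Tl_0[G'_0] = \identity$.

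A secondary point: your step (a) misplaces where the single-pinch case is special. Every $G_j \in \frakV_+$ preserves the abscissa, so each $\Im(G_j \ln G_j)$ has monodromy $-2\pi c^1$ and the monodromies cancel between the two sides for \emph{every} multiplicity $m$, not just $m = 1$; single-valuedness is always vacuous. The genuine difference between $m = 1$ and $m \geq 2$ lies entirely in the smoothness-at-the-origin analysis (for $m \geq 2$ the obstruction only sees the sum $\sum_j \sfG_j$ and the liftability, per \cref{prop:local-affine-equiv-mult-liftable}), which is the step your proposal leaves open.
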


\begin{proof}
  It is sufficient to prove the case $\sfG_0(Z) = Z$ and $\sfS_0(Z) = 0$.
  By \cref{def:affine-equiv-jet} there is $G'_0 \in \frakV^+$ with $\sfG'_0 = \Tl_0[G'_0]$ such that
  \begin{equation*}
    \Im(G'_0 \ln G'_0 - G'_0) - \Im(Z \ln Z - Z) = \wt S'_0 \circ G'_0 \in \calS
  \end{equation*}
  is smooth and differentiating we get
  \begin{equation*}
    (G'_0)^* \der \wt S'_0 = (G'_0)^* \kappa - \kappa
  \end{equation*}
  is smooth.
  Hence the translation along the fibers by $(G'_0)^* \kappa$
  \begin{equation*}
    \Psi_{(G'_0)^* \kappa} = \Psi_\kappa \circ \Psi_{(G'_0)^* \der \wt S'_0}
  \end{equation*}
  is smooth since both maps on the right-hand side are smooth, by~\cite[Lemma~2.17]{PeTa2018}.
  Then, by~\cite[Lemma~2.19]{PeTa2018}, we have $\sfG_0(Z) = Z$ and then $\sfS'_0(Z) = 0$.
\end{proof}

\cref{lem:affine-equiv-jet-single} is an easy corollary of~\cite[Lemma~2.17, Lemma~2.19]{PeTa2018}, but these two lemmas we use here are not new in~\cite{PeTa2018}.
The earliest proof of essentially the same lemmas, as far as we know, is in~\cite{MR1941440}.

\begin{corollary} \label{lem:affine-equiv-label-single}
  Two complete focus-focus labels $[\wt \sfs_0, \sfg_{0, 0}]$ and $[\wt \sfs'_0, \sfg'_{0, 0}]$ in $\tsfR \times \sfR_+$ are affine equivalent via $\sfG \in \sfV_+$ iff they are equal and $\sfG(X, Y) = (X, Y)$.
\end{corollary}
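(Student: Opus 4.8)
The plan is to unravel \cref{def:affine-equiv-label} in the multiplicity-one case, where the index set $\Z_m$ degenerates to $\Set{0}$, and then to quote \cref{lem:affine-equiv-jet-single}. First I would record that for a complete focus-focus label of multiplicity one the relation $\sfg_{0,0}(X,Y) = Y$ from \cref{def:complete-ff-label} is automatic, so such a label carries no information beyond its series $\wt\sfs_0 \in \tsfR$, and likewise $\sfg'_{0,0}(X,Y) = Y$. Computing the auxiliary tuples from \cref{def:affine-equiv-label} then gives $\sfG_0(X,Y) = (X, \sfg_{0,0}(X,Y)) = (X,Y)$, the identity jet, while $\sfG'_0(X,Y) = (X, \sfg'_{0,0}(\sfG(X,Y))) = (X, \proj_2 \sfG(X,Y)) = \sfG(X,Y)$, since any $\sfG \in \sfV_+$ fixes the first coordinate.

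Next I would match up the formal series $\wt\sfS_0$, $\wt\sfS'_0$ that appear in the affine equivalence of tuples with the data of the labels. Tracing the identification $\tsfs_j = \Tl_0[\wt S_0 \circ G_j^{-1}]$ from the proof of \cref{lem:local-affine-equiv} at $j = 0$, where $G_0 = \identity$, yields $\wt\sfS_0 = \wt\sfs_0$, and symmetrically $\wt\sfS'_0 = \wt\sfs'_0$. Hence the statement that $[\wt\sfs_0, \sfg_{0,0}]$ and $[\wt\sfs'_0, \sfg'_{0,0}]$ are affine equivalent via $\sfG$ unwinds to precisely the assertion that the tuples $(\identity, \wt\sfs_0)$ and $(\sfG, \wt\sfs'_0)$ in $\sfV_+ \times \tsfR$ are affine equivalent. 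By \cref{lem:affine-equiv-jet-single} this forces $\sfG = \identity$, i.e.\ $\sfG(X,Y) = (X,Y)$, and $\wt\sfs_0 = \wt\sfs'_0$; together with $\sfg_{0,0} = Y = \sfg'_{0,0}$ this is exactly equality of the two labels. Conversely, if the labels coincide and $\sfG(X,Y) = (X,Y)$, one takes $G_0 = G'_0 = \identity$ and any common smooth lift $\wt S_0 = \wt S'_0$ of $\wt\sfs_0$ to exhibit the tuples as equal, hence affine equivalent.

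I do not expect a genuine obstacle: all the analytic content has already been absorbed into \cref{lem:affine-equiv-jet-single} (and through it into \cite[Lemma~2.17, Lemma~2.19]{PeTa2018}). The only point demanding care is the bookkeeping---confirming that the $j = 0$ component of the label really is the series $\wt\sfS_0$ and that $\sfG'_0$ coincides with $\sfG$ rather than with some twist of it---which the degeneracy $\Z_1 = \Set{0}$ makes straightforward.
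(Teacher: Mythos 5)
Your proposal is correct and follows the paper's own route: the paper's proof is exactly the one-line reduction "apply \cref{lem:affine-equiv-jet-single} in view of \cref{def:affine-equiv-label}," and your argument is simply that reduction written out in full, correctly noting that $\sfg_{0,0}(X,Y)=Y$ is forced, that $\sfG_0$ is the identity jet, that $\sfG'_0=\sfG$, and that $\wt\sfS_0=\wt\sfs_0$, $\wt\sfS'_0=\wt\sfs'_0$. The extra bookkeeping you supply is accurate and fills in details the paper leaves implicit, but it is not a different proof.
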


\begin{proof}
  This is a corollary of \cref{lem:affine-equiv-jet-single} in view of \cref{def:affine-equiv-label}.
\end{proof}

\begin{theorem} \label{thm:global-affine-equiv-single}
  Two semitoric systems $F$ on $(M, \om)$ and $F'$ on $(M', \om')$ of dimension four with every focus-focus fiber once-pinched are affine equivalent via a diffeomorphism $G \colon B \to B'$ iff they are isomorphic via $G$.
\end{theorem}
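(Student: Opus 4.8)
The plan is to combine the global affine-equivalence criterion from \cref{thm:global-affine-equiv} with the rigidity statement for once-pinched labels in \cref{lem:affine-equiv-label-single}, and then match this against the isomorphism criterion coming from \cref{thm:markedpoly-classification}. First I would unpack both directions through these two classification results. For the ``if'' direction: if $F$ and $F'$ are isomorphic via $G$, then $G$ lifts from a symplectomorphism and in particular is a diffeomorphism of the bases intertwining the momentum maps, so it must preserve the period lattices, i.e.\ $G^* \check L^{F'} = \check L^F$; by \cref{thm:lattice-bundle-period-action-stor} this is the affine-equivalence condition. (Alternatively, invoke \cref{thm:markedpoly-classification}: isomorphic systems have complete semitoric ingredient representatives related by the $\Z \times \R$ action, and that action acts by tropical affine transformations on the polygon and by $\tsfs^i_j \mapsto \tsfs^i_j + 2\pi(kX+b)$ on the labels, which one checks preserves $\check L^F$.)

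The substantive direction is ``only if''. Suppose $G \colon B \to B'$ is a diffeomorphism with $G^*\check L^{F'} = \check L^F$. By \cref{thm:global-affine-equiv} there are complete semitoric ingredient representatives with $\De = \De'$, $(c_i)_{i=1}^{\vf} = (c'_i)_{i=1}^{\vf}$, and for each $i$ the once-pinched label $[\tsfs^i_0, \sfg^i_{0,0}]$ affine equivalent to $[\tsfs^{\prime i}_0, \sfg^{\prime i}_{0,0}]$ via $\sfG = \Tl_0[G]$. Since every focus-focus fiber is once-pinched, $m_i = m'_i = 1$ for all $i$, so \cref{lem:affine-equiv-label-single} applies and forces $[\tsfs^i_0, \sfg^i_{0,0}] = [\tsfs^{\prime i}_0, \sfg^{\prime i}_{0,0}]$ for every $i$ (and incidentally $\sfG(X,Y) = (X,Y)$). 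Now the two complete semitoric ingredient representatives are literally equal, so in particular they are related by the trivial element of the $\Z \times \R$ action; by \cref{thm:markedpoly-classification} the systems $F$ and $F'$ are isomorphic. The remaining point is to verify that the isomorphism produced this way can be taken to lift \emph{this} $G$ and not merely some other diffeomorphism of the bases: here I would argue that $G$ itself is determined by the data it preserves. On $B^{(2)}$, $G$ intertwines the local action coordinates (by $G^*\calA^{F'} = \calA^F$ and $\De = \De'$), hence agrees with the piecewise affine identification $A'_+{}^{-1} \circ A_+$ on the dense open $B_+$; by continuity $G$ is this canonical map on all of $B$, which is exactly the base diffeomorphism underlying the isomorphism constructed from equal ingredient representatives.

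I expect the main obstacle to be this last bookkeeping step — confirming that the symplectomorphism furnished by \cref{thm:markedpoly-classification} lifts precisely the given $G$ rather than an \emph{a priori} different diffeomorphism in the same isomorphism class. The resolution is that once the ingredient representatives coincide and $\sfG = \mathrm{id}$, the only freedom in the base diffeomorphism is the $\Z \times \R$ action, and the normalization $\De = \De'$, $(c_i) = (c'_i)$ together with the abscissa-preserving, orientation-preserving piecewise affine coordinates pins down $G$ uniquely as the identity in those coordinates; everything else is a direct citation of \cref{thm:global-affine-equiv}, \cref{lem:affine-equiv-label-single}, and \cref{thm:markedpoly-classification}. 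A minor technical check is that \cref{def:isom-int-sys-stor}'s positivity condition $\partial g / \partial c^2 > 0$ is automatic here, which follows since $G$ preserves orientation and the abscissa.
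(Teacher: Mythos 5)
Your proposal is correct and follows the same route as the paper, which proves this theorem by directly combining \cref{thm:global-affine-equiv} with \cref{lem:affine-equiv-label-single} (and implicitly \cref{thm:markedpoly-classification}); your additional bookkeeping about the base diffeomorphism being pinned down by the normalized piecewise affine coordinates is a sound elaboration of what the paper leaves implicit.
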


\begin{proof}
  This is a result of \cref{lem:affine-equiv-label-single,thm:global-affine-equiv}.
\end{proof}

\begin{corollary} \label{cor:global-affine-equiv-simple}
  Two simple semitoric systems $F$ on $(M, \om)$ and $F'$ on $(M', \om')$ of dimension four are affine equivalent via a diffeomorphism $G \colon B \to B'$ iff they are isomorphic.
\end{corollary}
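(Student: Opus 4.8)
The plan is to reduce the statement about simple semitoric systems to the already-proven \cref{thm:global-affine-equiv-single} about once-pinched fibers. The only gap between the two statements is the word ``simple'': a simple semitoric system has every fiber of $J$ containing at most one focus-focus point, while \cref{thm:global-affine-equiv-single} assumes every \emph{focus-focus fiber} is once-pinched. But these two conditions are equivalent for the fibers that matter. Indeed, by Zung's structure theorem (quoted after \cref{def:diff-positive}), a focus-focus fiber of a semitoric system has multiplicity $m_i \geq 1$ equal to the number of focus-focus points on it, and since $J$ is one of the components of $F = (J, H)$, the fibers of $J$ through those points carry at least that many focus-focus points as well; thus $F$ being simple forces every $m_i = 1$, i.e., every focus-focus fiber of $F$ is once-pinched, and conversely. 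So first I would record this equivalence in a sentence.

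Second, I would observe that for systems all of whose focus-focus fibers are once-pinched, the notion of ``semitoric isomorphism'' in \cref{def:isom-int-sys-stor} and that of ``isomorphism'' in \cref{def:isom-int-sys} lead to the same equivalence relation in the relevant sense, or more precisely that \cref{thm:global-affine-equiv-single} already phrases the conclusion as ``isomorphic via $G$.'' Here $G$ is the diffeomorphism of the base witnessing the affine equivalence, and the content of \cref{thm:global-affine-equiv-single} is exactly that such a $G$ lifts to a symplectomorphism intertwining the two systems. So once simplicity has been converted into the once-pinched hypothesis, the corollary is the verbatim statement of \cref{thm:global-affine-equiv-single}, and no further argument is needed beyond citing it.

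The one subtlety I would be careful about — and this is the step I expect to need the most attention — is matching the quantifiers on $G$. \cref{thm:global-affine-equiv-single} says ``affine equivalent via a diffeomorphism $G$ iff isomorphic via $G$,'' so it produces, for a \emph{given} affine equivalence $G$, an isomorphism inducing that same $G$; the corollary as stated only asks for ``affine equivalent via some $G$'' iff ``isomorphic'' (with no tracking of $G$ on the right). This is the easy direction of implication in each case, so it causes no trouble: an affine equivalence via some $G$ gives, by \cref{thm:global-affine-equiv-single}, an isomorphism (inducing that $G$), hence an isomorphism; conversely an isomorphism induces some base diffeomorphism $G$ which by \cref{thm:lattice-bundle-period-action-stor} (or directly) pulls back the period lattice, so it is an affine equivalence via that $G$. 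Thus the proof is three short steps: (i) simple $\iff$ all focus-focus fibers once-pinched, via Zung; (ii) apply \cref{thm:global-affine-equiv-single}; (iii) forget the dependence on $G$ on the isomorphism side. I anticipate no real obstacle — the work was done in \cref{lem:affine-equiv-jet-single} and its downstream corollaries — so the final write-up can be a single short paragraph invoking \cref{thm:global-affine-equiv-single}.
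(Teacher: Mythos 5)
Your proposal is correct and matches the paper's proof, which simply cites \cref{thm:global-affine-equiv-single}; your elaboration of why simplicity implies every focus-focus fiber is once-pinched is the right (and only) reduction needed. One small caveat: your parenthetical ``and conversely'' is false --- once-pinchedness of every focus-focus fiber of $F$ does not imply simplicity, since two distinct nodes may share the same $J$-value --- but this converse is never used, so it does not affect the argument.
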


\begin{proof}
  This is a corollary of \cref{thm:global-affine-equiv-single}.
\end{proof}

\subsection{First-order invariants}
\label{ssec:first-order}

We realize that the first-order smooth invariants and liftable germs of diffeomorphisms in Bolsinov--Izosimov~\cite{MR4057723} play important roles in our affine classification, and we cite their definitions here.

Let $G \in \frakV_+$ with the linearization at the origin $D_0 G = G(x + \imag y) = (x + \imag (ax + by))$ for some $a \in \R$ and $b > 0$ and recall $q$ in \cref{def:local-model-focus}.
Then the linearization of $X_{\proj_2 \circ G \circ q}$ is
\begin{equation*}
  D_0 X_{\proj_2 \circ G \circ q} = \begin{pmatrix}
    -b & -a &  0 &  0 \\
     a & -b &  0 &  0 \\
     0 &  0 &  b & -a \\
     0 &  0 &  a &  b
  \end{pmatrix}
\end{equation*}
with four eigenvalues $b \pm \imag a, -b \pm \imag a$.

Let $\sfG = \Tl_0[G] \in \sfV_+$; we assign a complex formal series $\sfG_\C(Z, \cj{Z}) = X + \imag \sfG(X, Y)$ by setting $Z = X + \imag Y$ with expansion
\begin{equation*}
  \sfG_\C(Z, \cj{Z}) = \sum_{p, q = 0}^\infty \sfG_\C^{(p, \cj{q})} Z^p \cj{Z}\vphantom{Z}^q
\end{equation*}
where $\sfG_\C^{(p, \cj{q})} \in \C$, $\sfG_\C^{(0, \cj{0})} = 0$, and $\sfG_\C^{(0, \cj{1})} + \cj{\sfG_\C^{(1, \cj{0})}} = 1$.

\begin{definition}
  Analoguous to~\cite[Proposition 3.14]{MR4057723}, let
  \begin{equation*}
    \mu = \sfG_\C^{(0, \cj{1})} \Big/ \cj{\sfG_\C^{(1, \cj{0})}} = \frac{1 - b + \imag a}{1 + b - \imag a} \in \bbB^2
  \end{equation*}
  and call it the \emph{first-order invariant} of $G$.
  For any $\mu \in \bbB^2$ let
  \begin{equation*}
    Z_\mu = \frac{1}{1 + \cj{\mu}} Z + \frac{\mu}{1 + \mu} \cj{Z}, \qquad \cj{Z}_\mu = \frac{\cj{\mu}}{1 + \cj{\mu}} Z + \frac{1}{1 + \mu} \cj{Z}.
  \end{equation*}
\end{definition}

We observe that the unique linear element in $\frakV_+$ with first-order invariant $\mu \in \bbB^2$ is $G_\mu$ defined by $G_\mu(Z) = Z_\mu$.
There is another expansion under coordinates $(Z_\mu, \cj{Z}_\mu)$ as
\begin{equation*}
  G_\C(Z, \cj{Z}) = \sum_{p, q = 0}^\infty \sfG_{\C, \mu}^{(p, \cj{q})} Z_\mu^p \cj{Z}_\mu^q.
\end{equation*}

\begin{definition} \label{def:mu-liftable-jet}
  For any $\mu \in \bbB^2$ we say that $\sfG \in \sfV_+$ is \emph{$\mu$-liftable} iff $\sfG_{\C, \mu}^{(0, \cj{q})} = 0$ for any $q \geq 0$; in particular $G$ is called \emph{liftable} in the sense of~\cite[Theorem 3.3]{MR4057723} if it is $0$-liftable.
  We say that $\sfG \in \sfV_+$ is \emph{$\mu$-holomorphic} iff $\sfG_{\C, \mu}^{(p, \cj{q})} = 0$ for any $p \geq 0$ and $q \geq 1$.
\end{definition}

\begin{remark}
  Being $\mu$-holomorphic implies being $\mu$-liftable.
\end{remark}

\subsection{Fibers with multiple pinched points}
\label{ssec:main-theorem-multiple-pinch}

\begin{lemma} \label{lem:local-affine-lnG}
  For any $G \in \frakV_+$ with first-order invariant $\mu$, we have that $G \ln G \in \calS$ is a linear combination of $Z_\mu^{-k}$ for $k \in \N$, $1$, and $\ln Z_\mu$ with coefficients in $\rmC^\infty(\C; \C)$.
\end{lemma}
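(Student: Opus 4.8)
The plan is to reduce the multi-valued function $G\ln G$ to a controlled ``model'' multi-valued part plus a genuinely smooth remainder, using the first-order invariant $\mu$ to choose the right coordinate. First I would write $G_\C(Z,\cj Z) = \sfG_{\C}^{(0,\cj 1)} \cj Z + \cj{\sfG_\C^{(1,\cj 0)}}{}^{-1}\cdot(\ldots)$; more usefully, pass to the coordinates $(Z_\mu, \cj Z_\mu)$ adapted to the linearization, in which the linear part of $G$ is exactly $Z \mapsto Z_\mu$ (the observation recorded just before \cref{def:mu-liftable-jet}). Writing $G_\C = Z_\mu \cdot u$ where $u = 1 + (\text{higher order})$ is a smooth, nonvanishing function near the origin with a well-defined single-valued logarithm, we get
\begin{equation*}
  G_\C \ln G_\C = Z_\mu u \,(\ln Z_\mu + \ln u) = (Z_\mu \ln Z_\mu)\, u + Z_\mu u \ln u.
\end{equation*}
The second summand $Z_\mu u\ln u$ is single-valued and smooth at the origin, hence lies in $\rmC^\infty(\C;\C)\subseteq\calS$, contributing nothing to the multi-valued part; so the whole problem is about $(Z_\mu \ln Z_\mu)\,u$.

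Next I would expand $u$ (equivalently $G_\C/Z_\mu$) as a power series in $(Z_\mu, \cj Z_\mu)$ with smooth (indeed polynomial-plus-smooth) coefficients and multiply term by term against $Z_\mu \ln Z_\mu$. A monomial $Z_\mu^{p}\cj Z_\mu^{q}$ times $Z_\mu\ln Z_\mu$ gives $Z_\mu^{p+1}\cj Z_\mu^{q}\ln Z_\mu$; writing $\cj Z_\mu^{q} = (Z_\mu\cj Z_\mu)^{q} Z_\mu^{-q}$ and noting $Z_\mu\cj Z_\mu = |Z_\mu|^2$ is a smooth real function of the original real coordinates, each such term becomes a smooth function times $Z_\mu^{p+1-q}\ln Z_\mu$. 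When $p+1-q\ge 0$ this is smooth-times-($Z_\mu^{k}\ln Z_\mu$); since $Z_\mu^k\ln Z_\mu$ for $k\ge 1$ is itself smooth at the origin (it extends continuously, and in fact smoothly, by $0$), it gets absorbed into the $\rmC^\infty(\C;\C)$ coefficient of the constant term $1$, while the $k=0$ case contributes to the $\ln Z_\mu$ term. When $p+1-q<0$, i.e. $q\ge p+2$, we get a smooth function times $Z_\mu^{-k}\ln Z_\mu$ with $k=q-p-1\ge 1$; here one further uses $Z_\mu^{-k}\ln Z_\mu = Z_\mu^{-k}(\ln|Z_\mu|^2 - \ln\cj Z_\mu)$ — wait, more cleanly: $\ln Z_\mu$ and $\ln\cj Z_\mu = \cj{\ln Z_\mu}$ differ by $2\imag\arg Z_\mu$ which is multivalued but $\ln Z_\mu + \cj{\ln Z_\mu} = \ln|Z_\mu|^2 \in \rmC^\infty$, so $Z_\mu^{-k}\ln Z_\mu = -Z_\mu^{-k}\cj{\ln Z_\mu} + Z_\mu^{-k}\ln|Z_\mu|^2$, and $Z_\mu^{-k}\cj{\ln Z_\mu}$ has smooth coefficient after rewriting $Z_\mu^{-k} = \cj Z_\mu^{k}/|Z_\mu|^{2k}$. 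One should track carefully whether the result is genuinely a \emph{finite} combination or an infinite series; since we only claim ``a linear combination ... with coefficients in $\rmC^\infty(\C;\C)$'' and the coefficient of $Z_\mu^{-k}$ can itself be a full power series, collecting all contributions to a fixed power $Z_\mu^{-k}$ yields one smooth coefficient, and the claim is precisely this reorganization.

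The main obstacle I anticipate is the bookkeeping of convergence and of what ``coefficient in $\rmC^\infty(\C;\C)$'' means: $G\ln G$ lives in $\calS$, i.e. at the level of germs/formal data, so I would interpret the statement as an identity of Taylor expansions / germs of multi-valued functions, and the real work is showing that after the rearrangement above the multi-valued content is exactly captured by $\{Z_\mu^{-k}\ln Z_\mu\}_{k\ge 1}$ — here I will lean on the structural lemmas cited from~\cite{PeTa2018} (especially that $\Psi_\kappa$-type translations and their compositions behave well, cf.\ \cite[Lemma~2.17]{PeTa2018}) to justify that no other monodromy types appear, and on the elementary fact that $Z^k\ln Z$ for $k\ge 1$ extends smoothly across $0$. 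A clean way to organize it is: (i) reduce to $(Z_\mu\ln Z_\mu)\cdot u$ with $u$ smooth nonvanishing; (ii) expand $u$; (iii) use $Z_\mu\cj Z_\mu$ smooth and $\ln Z_\mu + \cj{\ln Z_\mu}$ smooth to push every monomial into the span of $1$, $\ln Z_\mu$, and $Z_\mu^{-k}$, $k\ge 1$, over $\rmC^\infty(\C;\C)$; (iv) collect. Note one should double-check whether the statement really has $Z_\mu^{-k}$ (pure poles) rather than $Z_\mu^{-k}\ln Z_\mu$ — the former is what is written, and indeed the $\ln$ on those terms gets converted to a smooth coefficient via the $\ln Z_\mu + \cj{\ln Z_\mu}$ trick together with $\cj Z_\mu = |Z_\mu|^2/Z_\mu$, so the surviving multi-valued generator is the single $\ln Z_\mu$ and the $Z_\mu^{-k}$ carry only smooth coefficients, consistent with the phrasing.
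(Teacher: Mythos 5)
Your opening move --- factoring $G_\C = Z_\mu u$ and splitting $\ln G_\C = \ln Z_\mu + \ln u$ --- is the same identity that drives the paper's one-line computation, but you distribute the content between the two summands backwards, and that is a genuine gap. The function $u = G_\C/Z_\mu = 1 + \sum_{p+q\geq 2}\sfG_{\C,\mu}^{(p,\cj{q})}Z_\mu^{p-1}\cj{Z}_\mu^{q}$ is \emph{not} smooth at the origin unless $G$ is $\mu$-liftable: the terms with $p=0$ contribute $\cj{Z}_\mu^{q}/Z_\mu$, which is continuous but not $\rmC^\infty$. Consequently $Z_\mu u\ln u = G_\C\ln(G_\C/Z_\mu)$ is single-valued but not in $\rmC^\infty(\C;\C)$; writing $w = G_\C - Z_\mu$ (smooth, vanishing to second order) and expanding $\ln(1+w/Z_\mu)$ as a series in $w/Z_\mu$ shows that this summand is exactly the source of all the $Z_\mu^{-k}$-terms in the statement --- and, via \cref{prop:local-affine-equiv-mult-liftable}, these are precisely the terms that distinguish liftable from non-liftable jets. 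By declaring this summand smooth and discarding it, you skip the entire substance of the lemma. Meanwhile the summand you do analyze, $(Z_\mu\ln Z_\mu)u = G_\C\ln Z_\mu$, needs no analysis at all: it is already a smooth coefficient, namely $G_\C$, times the generator $\ln Z_\mu$.

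The further manipulations you perform on $(Z_\mu\ln Z_\mu)u$ are therefore not only unnecessary but also incorrect. Substituting $\cj{Z}_\mu^{q} = (Z_\mu\cj{Z}_\mu)^{q}Z_\mu^{-q}$ manufactures terms of the form $(\text{smooth})\cdot Z_\mu^{-k}\ln Z_\mu$ with $k\geq 1$, and such a term \emph{cannot} be rewritten in the span of $Z_\mu^{-j}$, $1$, $\ln Z_\mu$ over $\rmC^\infty(\C;\C)$: its monodromy is $2\pi\imag\,(\text{smooth})\cdot Z_\mu^{-k}$, whereas any admissible combination has monodromy $2\pi\imag$ times the smooth coefficient of $\ln Z_\mu$. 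The two auxiliary claims you invoke there are also false: $Z^{k}\ln Z$ for $k\geq 1$ extends continuously but not smoothly over the origin (its first derivative already contains $\ln Z$), and $Z_\mu^{-k}\cj{\ln Z_\mu} = \cj{Z}_\mu^{k}\,|Z_\mu|^{-2k}\,\cj{\ln Z_\mu}$ does not acquire a smooth coefficient since $|Z_\mu|^{-2k}$ blows up. The repair is to run your step (i) the other way: keep $G_\C\ln Z_\mu$ untouched as the $\ln Z_\mu$-term, and expand $G_\C\ln(G_\C/Z_\mu) = (Z_\mu+w)\sum_{l\geq 1}\tfrac{(-1)^{l-1}}{l}w^{l}Z_\mu^{-l}$, in which every term is a smooth coefficient (a power of $w$) times $Z_\mu^{-l}$ or $Z_\mu^{-l+1}$, with only finitely many $l$ contributing to each fixed power; no logarithm appears and the asserted decomposition follows.
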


\begin{proof}
  We compute directly,
  \begin{align*}
    G \ln G &= \sum_{l = 1}^\infty \frac{(-1)^{l-1}}{l} \Pa{\sum_{p+q\geq2} \sfG_{\C, \mu}^{(p, \cj{q})} Z_\mu^p \cj{Z}_{\mu}^q}^{l+1} Z_\mu^{-l} + G \ln Z_\mu + \rmC^\infty(\C; \C).
  \end{align*}
\end{proof}

\begin{proposition} \label{prop:local-affine-equiv-mult-liftable}
  If $(\sfG_0, \dotsc, \sfG_{m-1}), (\sfG'_0, \dotsc, \sfG'_{m-1}) \in \sfV_+^m$, all with first-order invariant $\mu$, and all $\sfG_j$ are $\mu$-liftable then the two tuples are affine admissible iff all $\sfG'_j$ are $\mu$-liftable for $j \in \Z_m$ and
  \begin{equation} \label{eq:G-equal-sum}
    \sum_{j \in \Z_m} \sfG_j = \sum_{j \in \Z_m} \sfG'_j.
  \end{equation}
\end{proposition}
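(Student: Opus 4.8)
The plan is to reduce affine admissibility to a single equation between multi-valued germs at the node, normalise its unprimed side using the $\mu$-liftability of the $\sfG_j$, and then split the remaining equation into two transcendentally independent parts. Fix lifts $G_j, G'_j \in \frakV_+$ of the given jets. They share the same linear part, whose complex form is $Z_\mu$ (this is what a common first-order invariant $\mu$ means), so $\Re G_j = \Re G'_j = \Re Z_\mu$; write $G'_j = Z_\mu + N'_j$ with $N'_j$ the nonlinear part. By \cref{def:affine-equiv-complete,def:affine-equiv-jet,lem:affine-equiv-adm,lem:affine-equiv-jet}, and since $\Im G_j = g_j$, $\Im G'_j = g'_j$ are smooth, affine admissibility of the two tuples is equivalent to $\sum_{j \in \Z_m} \Im(G'_j \ln G'_j) - \sum_{j \in \Z_m} \Im(G_j \ln G_j) \in \rmC^\infty(\C; \C)$. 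Writing $\ln G_j = \ln Z_\mu + \ln(G_j/Z_\mu)$ gives $\Im(G_j \ln G_j) = g_j \ln|Z_\mu| + (\Re G_j)\arg Z_\mu + \Im\big(G_j \ln(G_j/Z_\mu)\big)$, and likewise for $G'_j$, so the $\arg Z_\mu$ terms cancel in the difference. A jet $\sfG_j$ is $\mu$-liftable (\cref{def:mu-liftable-jet}) exactly when its complex form has no pure $\cj{Z}_\mu$ monomial, i.e. $G_j/Z_\mu$ is a smooth unit near $0$, equivalently $G_j \ln(G_j/Z_\mu) \in \rmC^\infty(\C; \C)$ by \cref{lem:local-affine-lnG}; so the unprimed sum contributes only $\big(\sum_j g_j\big)\ln|Z_\mu|$ modulo $\rmC^\infty(\C; \C)$, and affine admissibility becomes
\begin{equation} \label{eq:reduced-mult}
  \Big(\sum_{j \in \Z_m} g'_j - \sum_{j \in \Z_m} g_j\Big) \ln|Z_\mu| + \sum_{j \in \Z_m} \Im\big(G'_j \ln(G'_j/Z_\mu)\big) \in \rmC^\infty(\C; \C).
\end{equation}

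For the ``if'' implication, if all $\sfG'_j$ are $\mu$-liftable and $\sum_j \sfG_j = \sum_j \sfG'_j$, the second sum in \cref{eq:reduced-mult} is smooth by the same argument, while $\sum_j g'_j - \sum_j g_j$ is flat at the origin so its product with $\ln|Z_\mu|$ is flat, hence smooth; thus \cref{eq:reduced-mult} holds. For the ``only if'' implication I would first separate the two summands. By \cref{lem:local-affine-lnG} each $G'_j \ln(G'_j/Z_\mu)$ is a convergent combination of the $Z_\mu^{-k}$, $k \geq 0$, with coefficients power series in $\cj{Z}_\mu$ — elementarily, $G'_j\ln(G'_j/Z_\mu) = N'_j + \sum_{n \geq 2}\tfrac{(-1)^n}{n(n-1)}(N'_j)^n Z_\mu^{1-n}$ from $(1+u)\ln(1+u) = u + \sum_{n\geq2}\tfrac{(-1)^n}{n(n-1)}u^n$ — so it carries no logarithm, whereas $\phi \ln|Z_\mu|$ with $\phi$ smooth carries a genuine $\ln|Z_\mu|$ in its polyhomogeneous expansion at the origin unless $\phi$ is flat. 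Uniqueness of such expansions then forces, from \cref{eq:reduced-mult}, that $\sum_j g'_j - \sum_j g_j$ is flat, i.e. $\sum_j \sfG_j = \sum_j \sfG'_j$, which is \cref{eq:G-equal-sum}, and separately $\sum_{j \in \Z_m} \Im\big(G'_j \ln(G'_j/Z_\mu)\big) \in \rmC^\infty(\C; \C)$.

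It remains to deduce that each $\sfG'_j$ is $\mu$-liftable. From $\sum_j \sfG_j = \sum_j \sfG'_j$ and the $\mu$-liftability of the $\sfG_j$, the complex form of $\sum_j N'_j$ has no pure $\cj{Z}_\mu$ monomial. Suppose some $\sfG'_j$ is not $\mu$-liftable; let $q_\ast$ be the least degree of a pure $\cj{Z}_\mu$ monomial occurring in any $N'_j$, let $S$ be the set of indices $j$ for which $N'_j$ has such a monomial of degree $q_\ast$, and let $a_j \neq 0$ be its coefficient for $j \in S$. Since the pure part (the $Z_\mu^0$-part) of $(N'_j)^n$ is the $n$-th power of the pure part of $N'_j$, the displayed expansion shows that the coefficient of $\cj{Z}_\mu^{n q_\ast} Z_\mu^{1-n}$ in $\sum_j G'_j \ln(G'_j/Z_\mu)$ equals $\tfrac{(-1)^n}{n(n-1)} \sum_{j \in S} a_j^{\,n}$ for every $n \geq 2$, with no contribution from the summands indexed by $n' \neq n$. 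Because $\cj{Z}_\mu^{n q_\ast} Z_\mu^{1-n}$ has angular frequency of modulus $n(q_\ast+1)-1 > n q_\ast - n + 1$, it is too singular to occur in a smooth function, so $\sum_{j \in S} \Im\big(G'_j \ln(G'_j/Z_\mu)\big) \in \rmC^\infty(\C; \C)$ forces $\sum_{j \in S} a_j^{\,n} = 0$ for all $n \geq 2$; together with $\sum_{j \in S} a_j = 0$ (the $\cj{Z}_\mu^{q_\ast}$-coefficient of $\sum_j N'_j$), all power sums of the finite family $\{a_j\}_{j \in S}$ vanish, so Newton's identities give $a_j = 0$ for $j \in S$, a contradiction. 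Hence every $\sfG'_j$ is $\mu$-liftable, and with $\sum_j\sfG_j=\sum_j\sfG'_j$ this proves the proposition.

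The main obstacle is the bookkeeping in the previous paragraph: one must verify that in $\sum_j \Im\big(G'_j \ln(G'_j/Z_\mu)\big)$ the coefficient of $\cj{Z}_\mu^{n q_\ast} Z_\mu^{1-n}$ receives no contribution from the other summands $n' \neq n$ (this rests on the $N'_j$ vanishing to order at least two together with a degree count), nor from the $\ln|Z_\mu|$ term of \cref{eq:reduced-mult} — so that the whole cascade $\sum_{j \in S} a_j^{\,n} = 0$, $n \geq 1$, and hence Newton's identities, can legitimately be invoked; the transcendence separation in the second paragraph is the other place where care is needed, though it is a standard uniqueness statement for polyhomogeneous expansions.
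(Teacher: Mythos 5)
Your proof is correct and follows essentially the same route as the paper: expand $G\ln G$ as $G\ln Z_\mu$ plus a Laurent-type series in $Z_\mu^{-1}$ built from powers of the nonlinear part (the content of \cref{lem:local-affine-lnG}), match the logarithmic terms to obtain \cref{eq:G-equal-sum}, and kill the pure $\cj{Z}_\mu$ coefficients by showing all their power sums vanish. The only differences are cosmetic — you extract the minimal pure degree $q_*$ and invoke Newton's identities where the paper inducts on the degree and uses a Vandermonde determinant on the power sums of order at least two — and you are somewhat more explicit about why the $\ln|Z_\mu|$ term and the terms of different Laurent order cannot cancel against one another.
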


\begin{proof}
  Without loss of generality, we assume $\mu = 0$ and then $Z_\mu = Z$.
  Let
  \begin{align*}
    \sfH &= \sum_{j \in \Z_m} \sum_{l = 1}^\infty \frac{(-1)^{l-1}}{l} \Im \Pa{\Pa{\sum_{p+q\geq2} \sfG_{j\C}^{(p, \cj{q})} Z^{p-1} \cj{Z}^q}^{l+1} Z}, \\
    \sfH' &= \sum_{j \in \Z_m} \sum_{l = 1}^\infty \frac{(-1)^{l-1}}{l} \Im \Pa{\Pa{\sum_{p+q\geq2} \sfG_{j\C}^{\prime (p, \cj{q})} Z^{p-1} \cj{Z}^q}^{l+1} Z}.
  \end{align*}
  Then
  \begin{align*}
    \sum_{j \in \Z_m} \Im \Pa{G_j \ln G_j} &= \sfH + \sum_{j \in \Z_m} \Im \Pa{G_j \ln Z} + \rmC^\infty(\C; \C), \\
    \sum_{j \in \Z_m} \Im \Pa{G'_j \ln G'_j} &= \sfH' + \sum_{j \in \Z_m} \Im \Pa{G'_j \ln Z} + \rmC^\infty(\C; \C).
  \end{align*}
  Note that the terms with $\ln Z$ coincide iff \cref{eq:G-equal-sum}.
  We claim that $G_j$ (resp. $G'_j$) is liftable for every $j \in \Z_m$ iff $\sfH \in \tsfR$ (resp. $\sfH' \in \tsfR$), and the result would hold.

  Here is the proof of the claim.
  If $G_j$ is liftable for every $j \in \Z_m$ then
  \begin{align*}
    \sfH &= \sum_{j \in \Z_m} \sum_{l = 1}^\infty \frac{(-1)^{l-1}}{l} \Im \Pa{\Pa{\sum_{p\geq1, p+q\geq2} \sfG_{j\C}^{(p, \cj{q})} Z^{p-1} \cj{Z}^q}^{l+1} Z} \in \tsfR.
  \end{align*}
  On the other hand, for any $l \geq 1$ the terms
  \begin{align*}
    \sfH^{(-l, \cj{2l+2})} &= \sum_{j \in \Z_m} \frac{(-1)^{l-1}}{2 l \imag} \Pa{\sfG_{j\C}^{(0, \cj{2})}}^{l+1}, &\sfH^{(2l+2, \cj{-l})} &= \sum_{j \in \Z_m} \frac{(-1)^l}{2 l \imag} \Pa{\cj{\sfG}_{j\C}^{(0, \cj{2})}}^{l+1}.
  \end{align*}
  If $\sfH \in \tsfR$, then for $l \geq 1$ we have
  \begin{align*}
    \sum_{j \in \Z_m} \Pa{\sfG_{j\C}^{(0, \cj{2})}}^{l+1} = \sum_{j \in \Z_m} \Pa{\cj{\sfG}_{j\C}^{(0, \cj{2})}}^{l+1} = 0
  \end{align*}
  and by the Vandermont determint $\sfG_{j\C}^{(0, \cj{2})} = 0$ for every $j \in \Z_m$.
  Now we have
  \begin{align*}
    \sfH^{(-l, \cj{3l+3})} &= \sum_{j \in \Z_m} \frac{(-1)^{l-1}}{2 l \imag} \Pa{\sfG_{j\C}^{(0, \cj{3})}}^{l+1}, &\sfH^{(3l+3, \cj{-l})} &= \sum_{j \in \Z_m} \frac{(-1)^l}{2 l \imag} \Pa{\cj{\sfG}_{j\C}^{(0, \cj{3})}}^{l+1}
  \end{align*}
  for any $l \geq 1$ and then since $\sfH \in \tsfR$ we have
  \begin{align*}
    \sum_{j \in \Z_m} \Pa{\sfG_{j\C}^{(0, \cj{3})}}^{l+1} = \sum_{j \in \Z_m} \Pa{\cj{\sfG}_{j\C}^{(0, \cj{3})}}^{l+1} = 0
  \end{align*}
  and therefore $\sfG_{j\C}^{(0, \cj{3})} = 0$ for every $j \in \Z_m$.
  An induction process would imply that $\sfG_{j\C}^{(0, \cj{q})} = 0$ for any $q \in \N$ and therefore, $\sfG_j$ are liftable for every $j \in \Z_m$.
\end{proof}

\subsection{Counterexamples}
\label{ssec:counterexamples}

We point out that the affine structure \textbf{does not} determine the symplectic structure, by trivial examples.

\begin{example}
  Let $m \geq 3$.
  Let $\tsfl = [\tsfs_j, \sfg_{j, \ell}]_{j, \ell \in \Z_m}$ be a complete focus-focus label of multiplicity $m$ generated by distinct elements $\sfg_{j, j+1} \in \sfR_+$, $j \in \Z_m \setminus \Set{m-1}$, and an arbitrary element $\tsfs_0 \in \tsfR$.
  Let $\sigma \in S_m$ be a permutation that breaks the cyclic order and let $\tsfl' = [\tsfs'_j, \sfg'_{j, \ell}]_{j, \ell \in \Z_m}$ be a complete focus-focus label with $\sfg'_{j, \ell} = \sfg_{\sigma j, \sigma \ell}$ and $\tsfs'_j = \tsfs_{\sigma j}$ for $j, \ell \in \Z_m$.
  Then $\tsfl$ and $\tsfl'$ are different but affine equivalent via $(X, Y)$.
\end{example}

\begin{example}
  Let $m = 2$.
  Let $\sfG'_0, \sfG'_1$ be arbitrary liftable elements of $\sfV^+$, let $\sfG_1 = \sfG'_0 + \sfG'_1 - (X, Y)$ and let $\tsfs_0 \in \sfR_{2\pi X}$ be an arbitrary element.
  Let
  \begin{align*}
    \sfg_{0, 1} = \proj_1 \circ \sfG_1, \quad \sfg'_{0, 1} = \proj_1 \circ \sfG'_1 \circ (\sfG'_0)^{-1} \in \sfR_+
  \end{align*}
  and let
  \begin{align*}
    \tsfs'_0 = \tsfs_0 \circ (\sfG'_{0\C})^{-1}(Z) + \sum_{l = 1}^\infty \frac{(-1)^{l-1}}{l} \Im \Pa{\sfP_l \circ (\sfG'_{0\C})^{-1}(Z) \cdot (\sfG'_{0\C})^{-1}(Z)}
  \end{align*}
  where
  \begin{equation*} \begin{split}
    \sfP_l(Z) &= \Pa{\sum_{p\geq1, p+q\geq2} \sfG_{0\C}^{\prime (p, \cj{q})} Z^{p-1} \cj{Z}^q}^{l+1} \\
    &\phantom{{}=}+ \Pa{\sum_{p\geq1, p+q\geq2} \sfG_{1\C}^{\prime (p, \cj{q})} Z^{p-1} \cj{Z}^q}^{l+1}
      - \Pa{\sum_{p\geq1, p+q\geq2} \sfG_{1\C}^{(p, \cj{q})} Z^{p-1} \cj{Z}^q}^{l+1}.
  \end{split} \end{equation*}
  Let $\tsfl = [\tsfs_j, \sfg_{j, \ell}]_{j, \ell \in \Z_2}$ be a complete focus-focus label generated by $\sfg_{0, 1}$ and $\tsfs_0$ and let $\tsfl' = [\tsfs'_j, \sfg'_{j, \ell}]_{j, \ell \in \Z_2}$ be one generated by $\sfg'_{0, 1}$ and $\tsfs'_0$.
  Then $\tsfl$ and $\tsfl'$ are different but affine equivalent via $\sfG'_0$.
\end{example}

\begin{example}
  As a concrete example, let $m = 2$, let $a, b \in \C$ with $ab(a + b) \neq 0$, and
  \begin{align*}
    \sfG_{1\C}(Z) = Z + (a + b) Z \cj{Z}, \quad \sfG'_{0\C}(Z) = Z + a Z \cj{Z}, \quad \sfG'_{1\C}(Z) = Z + b Z \cj{Z}.
  \end{align*}
  Then by Lagrange inversion formula and direct calculation, we obtain
  \begin{align*}
    (\sfG'_{0\C})^{-1} (W) &= \sum_{l = 1}^\infty \frac{(-2a)^{l-1} (2l-3)!!}{l!} \Pa{\Re W - a (\Im W)^2}^l + \Im W \imag
  \end{align*}
  with setting $(-1)!! = 1$ and we let
  \begin{align*}
    \sfg_{0, 1}(Z) &= \Im Z, \qquad \sfg'_{0, 1}(W) = \Im W, \qquad \tsfs_0(Z) = 0, \\
    \tsfs'_0(W) &= \sum_{l = 1}^\infty \frac{(-1)^{l-1}}{l} \Im \Pa{\Pa{a^{l+1} + b^{l+1} - (a + b)^{l+1}} (\sfG'_{0\C})^{-1}(W) \cj{(\sfG'_{0\C})^{-1}(W)}^{l+1}}.
  \end{align*}
  Let $\tsfl = [\tsfs_j, \sfg_{j, \ell}]_{j, \ell \in \Z_m}$ be a focus-focus label of multiplicity $m$ generated by $\sfg_{0, 1}$ and $\tsfs_0$ and let $\tsfl' = [\tsfs'_j, \sfg'_{j, \ell}]_{j, \ell \in \Z_m}$ be a focus-focus label of multiplicity $m$ generated by $\sfg'_{0, 1}$ and $\tsfs'_0$.
  Then $\tsfl$ and $\tsfl'$ are different but affine equivalent via $\sfG'_0$.
\end{example}

The last theorem is an algorithm to generate nontrivial examples.

\begin{theorem} \label{thm:local-affine-equiv-label-mult}
  Let $\tsfl = [\tsfs_j, \sfg_{j, \ell}]_{j, \ell \in \Z_m}$ and $\tsfl' = [\tsfs'_j, \sfg'_{j, \ell}]_{j, \ell \in \Z_m}$ be complete focus-focus labels of multiplicity $m \in \N$.
  Let $\sfG_j(X, Y) = (X, \sfg_j(X, Y))$ and $\sfG'_j(X, Y) = (X, \sfg'_j(X, Y))$ for $j \in \Z_m$.
  If
  \begin{enumerate}
    \item both $\sfG_j$ and $\sfG'_j$ have the first-order invariant $\mu_j$ for $j \in \Z_m$,
    \item both $\sfG_j$ and $\sfG'_j$ are $\mu_j$-liftable for $j \in \Z_m$,
    \item both $\sfG_j$ and $\sfG'_j$ have the same sum for $j \in \Z_m$,
  \end{enumerate}
  then there is a unique $\tsfs''_j \in \tsfR$ for each $j \in \Z_m$ such that $\tsfl$ and $[\tsfs''_j, \sfg'_{j, \ell}]_{j, \ell \in \Z_m}$ are affine equivalent.
\end{theorem}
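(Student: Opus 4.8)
The plan is to convert the asserted affine equivalence of complete focus-focus labels into an affine admissibility statement about tuples of diffeomorphisms, apply \cref{prop:local-affine-equiv-mult-liftable} (and the argument in its proof), and then read the $\tsfs''_j$ off the resulting data.

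First I would carry out the reduction. Put $\sfG := \sfG'_0$ and let $\sfG_j, \sfG'_j$ be as in \cref{def:affine-equiv-label}, so the $j=0$ instance reads $\sfG'_0 = \sfG$, and let $\wt\sfS_0 \in \tsfR$ be the jet attached to $\tsfl$ by $\tsfs_j = \Tl_0[\wt S_0 \circ G_j^{-1}]$. By \cref{def:affine-equiv-label}, $\tsfl$ and $[\tsfs''_j, \sfg'_{j, \ell}]_{j, \ell \in \Z_m}$ are affine equivalent via $\sfG$ precisely when the tuples $(\sfG_0, \dotsc, \sfG_{m-1}, \wt\sfS_0)$ and $(\sfG'_0, \dotsc, \sfG'_{m-1}, \wt\sfS''_0)$ in $\sfV_+^m \times \tsfR$ are affine equivalent, where $\wt\sfS''_0 \in \tsfR$ is the jet with $\tsfs''_j = \Tl_0[\wt S''_0 \circ G'_0 \circ (G'_j)^{-1}]$; thus choosing a valid $(\tsfs''_j)_j$ is the same as choosing a $\wt\sfS''_0$ with this property. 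By \cref{lem:affine-equiv-adm,lem:affine-equiv-jet}, once the tuples $(\sfG_0, \dotsc, \sfG_{m-1})$ and $(\sfG'_0, \dotsc, \sfG'_{m-1})$ are affine admissible there is, for the given $\wt\sfS_0$, a unique such $\wt\sfS''_0$, hence a unique $(\tsfs''_j)_j$. So the whole statement reduces to proving affine admissibility from hypotheses (1)--(3).

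Second I would prove that affine admissibility. When all the $\mu_j$ coincide with a single $\mu$ this is immediate from \cref{prop:local-affine-equiv-mult-liftable}: hypothesis (2) supplies the $\mu$-liftability of every $\sfG_j$ and every $\sfG'_j$, and hypothesis (3) supplies $\sum_{j \in \Z_m}\sfG_j = \sum_{j \in \Z_m}\sfG'_j$, which are exactly the conditions there equivalent to affine admissibility. For the general case I would repeat the argument of that proposition with the first-order invariants varying: by \cref{lem:local-affine-lnG} each $G_j \ln G_j$ and each $G'_j \ln G'_j$ is a $\rmC^\infty(\C; \C)$-combination of $Z_{\mu_j}^{-k}$ for $k \in \N$, of $1$, and of $\ln Z_{\mu_j}$, and $\mu_j$-liftability forces the negative-power part into $\tsfR$, just as in the single-index computation in the proof of \cref{prop:local-affine-equiv-mult-liftable}. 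So, modulo $\rmC^\infty(\C; \C)$, the defining quantity of affine admissibility becomes $\sum_{j \in \Z_m}\Im\bigl((G'_j - G_j)\ln Z_{\mu_j}\bigr)$; hypothesis (1) puts the same $Z_{\mu_j}$ in the $j$-th term on both sides, hypothesis (2) makes $G'_j - G_j$ divisible by $Z_{\mu_j}$, and hypothesis (3), read one $\mu_j$-class of indices at a time, makes the partial sum over that class vanish, so each logarithmic contribution is annihilated and the remaining sum is smooth at the origin. With the first step this establishes the theorem, and the unique $\tsfs''_j$ is the one given explicitly by $\tsfs''_j = \Tl_0[\wt S''_0 \circ G'_0 \circ (G'_j)^{-1}]$.

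I expect the second step to be the real work. When the $\mu_j$ are not all equal one cannot cite \cref{prop:local-affine-equiv-mult-liftable} verbatim and must rerun its Vandermonde/liftability bookkeeping one $\mu_j$-class at a time, keeping track of the logarithmic singularities $\ln Z_{\mu_j}$ and checking that contributions from distinct classes do not interfere; the first step, by contrast, is just unwinding \cref{def:affine-equiv-label} and applying \cref{lem:affine-equiv-adm,lem:affine-equiv-jet}.
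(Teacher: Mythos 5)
Your proposal follows the paper's own route exactly: the paper derives this theorem in one line from \cref{def:affine-equiv-label,lem:affine-equiv-adm,prop:local-affine-equiv-mult-liftable}, which is precisely your first step together with the constant-$\mu$ case of your second step, and in that case your argument is complete. You are right to single out the non-constant $\mu_j$ case as the point where \cref{prop:local-affine-equiv-mult-liftable} cannot be cited verbatim (the paper silently elides this), but your sketch of that case contains a gap: hypothesis (3) asserts only the equality of the total sums $\sum_{j \in \Z_m} \sfG_j = \sum_{j \in \Z_m} \sfG'_j$, so it cannot be ``read one $\mu_j$-class of indices at a time'' to give the vanishing of each class-wise partial sum of $\sfG'_j - \sfG_j$; that stronger statement is simply not among the hypotheses. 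To close the argument you would need to show that the logarithmic singular parts $\Im((G'_j - G_j)\ln Z_{\mu_j})$ attached to distinct values of $\mu_j$ cannot cancel one another --- for instance because $\ln(Z_{\mu}/Z_{\nu})$ with $\mu \neq \nu$ is homogeneous of degree zero with infinitely many nonzero Fourier modes in the angular variable, so its product with a nonzero jet is never a smooth jet --- and then class-wise sum equality appears as an additional necessary condition for admissibility rather than as a consequence of hypothesis (3). So either the theorem should be read with all $\mu_j$ equal (the only case the cited proposition covers), or hypothesis (3) must be strengthened to a class-wise statement; your reduction in the first step and your treatment of the equal-$\mu$ case are otherwise identical to the paper's.
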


\begin{proof}
  This is a result of \cref{def:affine-equiv-label,lem:affine-equiv-adm,prop:local-affine-equiv-mult-liftable}.
\end{proof}

\end{document}